\newtheorem{theorem}{Theorem}[section]
\newtheorem{lemma}[theorem]{Lemma}
\newtheorem{claim}[theorem]{Claim}
\theoremstyle{definition}
\theoremstyle{remark}
\newtheorem{remark}[theorem]{Remark}
\numberwithin{equation}{section}
\begin{document}
\title[Renormalized Area]{Renormalized Area for Minimal Hypersurfaces of 5D Poincar\'e-Einstein Spaces}
\author{Aaron J. Tyrrell}
\address{18A Department of Mathematics and Statistics \\ Texas Tech University \\ Lubbock \\ TX 79409 \\ USA}
\email{aatyrrel@ttu.edu}
\keywords{Renormalized area, holography, Poincar\'e-Einstein, minimal hypersurface, $L^2$ second fundamental form}
\begin{abstract}
In this paper we derive a Gauss-Bonnet formula for the renormalized area of Graham-Witten minimal hypersurfaces of 5-dimensional Poincar\'e-Einstein spaces. The formula we derive expresses the renormalized area in terms of integrals of pointwise scalar Riemannian invariants. We also prove a result which gives a characterization of minimal hypersurfaces with $L^2$ second fundamental form in terms of conformal geometry at infinity.
\end{abstract}
\maketitle
\section{Introduction}
\label{Intro}
The renormalized volume of Poincar\'e-Einstein manifolds, $(X^{n+1},g_+),$ and the renormalized area of their minimal submanifolds has been a focus of much study over the past 20 years in both geometry and physics. Renormalized volume arises via a construction carried out by Henningson and Skenderis \cite{firstrenvol} in the physics literature and was developed by Graham \cite{graham1999volume} in the math literature. It is defined by taking the order-zero term in the expansion in $\epsilon$ of the quantity $\text{Vol}_{g_+} ({r > \epsilon}),$ where $r$ is a special defining function for $\partial X$ (section 2 will cover all preliminaries and definitions in detail). An important formula is Anderson's \cite{Anderson} Gauss-Bonnet formula for the renormalized volume, $V(g_+),$ of a 4D Poincar\'e-Einstein space which states that
\begin{equation}\label{anderson}
	8\pi^2\chi(X^4) = 6V(g_+)+\frac{1}{4}\int_X|W_{g_+} |^2_{g_+}dv_{g_+}.
\end{equation}
Here $W_{g_+}$ is the Weyl tensor of $g_+.$ Since $|W_{g_+} |^2_
{g_+}$ is a pointwise conformal invariant of weight -4, the integral is guaranteed to converge despite the infinite volume of $(X, g_+)$. In his paper, Anderson notes that given a Poincar\'e-Einstein manifold $(X^4, g_+)$ \eqref{anderson} implies the following rigidity result:
\begin{equation}\label{rigidity}
	V(g_+)\leq \frac{4\pi^2}{3}\chi(X^4),
\end{equation}
with equality if and only if $g_+$ is hyperbolic.\par Renormalized area was introduced by Graham and Witten in \cite{GW} where they outlined a theory of area renormalization for a certain class of minimal submanifolds of Poincar\'e-Einstein spaces; we call these "Graham-Witten" minimal submanifolds. The focus of this paper is Graham-Witten minimal hypersurfaces of a Poincar\'e-Einstein space $(X^5,g_+).$ These are complete minimal hypersurfaces, $(Y^4,h_+),$ dividing $X$ into two pieces $X^-$ and $X^+$ such that $Y\cap \partial X=\Sigma^3 \neq \emptyset.$ Renormalized area has been studied in both geometry and physics. It has been applied to the asymptotic Plateau problem, which is the problem of finding a minimal submanifold of hyperbolic space bounding a pre scribed asymptotic boundary at infinity. This situation was treated conclusively by Anderson in \cite{anderson1982complete}, \cite{anderson1983complete}. This problem can be posed on more general spaces including Poincar\'e-Einstein spaces. Renormalized area proves to be a useful tool here for generalizing many techniques from the compact case to these minimal submanifolds extending to infinity. There is a rich existence theory developed by Oliveira and Moret \cite{de1998complete}, see also Coskunuzer \cite{coskunuzer2003minimal}, \cite{coskunuzer2006generic}. A survey paper on this problem was also written by Coskunuzer \cite{coskunuzer2009asymptotic}. Some regularity results for minimal hypersurfaces in this setting are given by \cite{hard},\cite{lin},\cite{linn},\cite{tonegawa} (an error in \cite{linn} is corrected in \cite{tonegawa}). The renormalized area functional is also relevant in the Ads\slash CFT correspondence. Maldacena proposed in \cite{phys1} that the expectation value of the Wilson loop operator corresponding to a closed loop $\gamma$ in $\partial X,$ within the context of the large t'Hooft coupling regime should be given by the renormalized area of a minimal surface bounding $\gamma.$ This is also studied in \cite{phys2}. Both of these motivated \cite{GW} which in turn was part of the motivation for the author of this paper. A physical context in which higher dimensional renormalized area, and in particular the case we consider for this paper, proves to be important is in calculating entanglement entropies for spatial regions within holographic QFTs. This was first observed by Ryu-Takayanagi \cite{ryu2006aspects}, \cite{ryu2006holographic}. They showed that given a holographic CFT on the boundary geometry at infinity of an AdS, the entanglement entropy of a spatial region within the CFT is equal to the renormalized area of a minimal hypersurface of the bulk AdS up to a constant factor. This was generalized further by Hubeny-Rangamani-Takayanagi \cite{bhattacharya2015entanglement}. Graham and Witten showed that renormalized area is well-defined for even-dimensional submanifolds of Poincar\'e-Einstein spaces. Alexakis and Mazzeo \cite{Alex} studied this functional on properly embedded, minimal surfaces of Poincar\'e-Einstein spaces. The work presented in this paper was partly motivated by work from \cite{Alex}. In particular the explicit formula they obtained for the renormalized area, $A(Y),$ of a minimal 2 dimensional hypersurface Y within a 3 dimensional hyperbolic space:
\begin{equation}\label{AM}
A(Y)=-2\pi\chi(Y)-\frac{1}{2}\int_{Y}|\mathring{II}|^2 dA,
\end{equation}
where $\mathring{II}$ is the trace-free second fundamental form of $Y.$ \newline\indent An important property of renormalized area$\backslash$volume is that, when the (sub)manifold in question is even dimensional it is an invariant of that (sub)manifold. Otherwise it will depend on the choice of special defining function used in the construction. Therefore the natural case to consider in order to generalize \eqref{AM} is the following: 
\begin{theorem}[\textbf{Gauss-Bonnet Formula for Renormalized Area}]\label{1.1}
Let $(X^5,g_+)$ be a Poincar\'e-Einstein space. Let $(Y^4,h_+)$ be a Graham-Witten minimal hypersurface and set $A(Y)$ as the renormalized area of $Y.$ Then
\begin{align}\label{best formula}6A(Y)=8\pi^2\chi(Y)&-\int_{Y}\frac{|W|^2_{h_+}}{4}dA_{h_+}+\int_{Y}\frac{|E|^2_{h_+}}{2}dA_{h_+}\\\nonumber &-\int_{Y}\frac{|\mathring{B}|_{h_+}^4}{24}dA_{h_+}-\int_{Y}\frac{\Delta^Y |\mathring{B}|_{h_+}^2}{2}+|\mathring{B}|_{h_+}^2dA_{h_+},
 \end{align}
 where $W$ is the Weyl tensor on $Y,$ $E$ is the trace-free Ricci tensor on $Y,$ and $\mathring{B}$ is the trace-free second fundamental form of $Y.$ Note that we are also claiming that every term on the right-hand side above is finite.
\end{theorem}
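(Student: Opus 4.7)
My plan is to apply the 4-dimensional Chern-Gauss-Bonnet formula with boundary to the compact truncation $Y_\epsilon := Y \cap \{r > \epsilon\}$, match the resulting $\int_{Y_\epsilon} 6 \, dA_{h_+}$ contribution with $6\,\mathrm{Area}_{h_+}(Y_\epsilon)$, and pass to the limit $\epsilon \to 0$. The key algebraic input is the Gauss equation: because $(X^5, g_+)$ is Einstein with $\mathrm{Ric}_{g_+} = -4 g_+$ and $Y$ is minimal (so $B = \mathring{B}$), the scalar curvature of $Y$ satisfies
\begin{equation*}
R_{h_+} = -12 - |\mathring{B}|^2_{h_+}, \qquad \frac{R_{h_+}^2}{24} = 6 + |\mathring{B}|^2_{h_+} + \frac{|\mathring{B}|^4_{h_+}}{24}.
\end{equation*}
Substituting this identity into the Chern-Gauss-Bonnet integrand $\frac{|W|^2}{4} - \frac{|E|^2}{2} + \frac{R^2}{24}$ extracts the constant $6$ together with exactly the $|\mathring{B}|^2$ and $|\mathring{B}|^4$ terms appearing on the right of \eqref{best formula}.

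\textbf{Algebraic rearrangement.} Applying Chern-Gauss-Bonnet to $Y_\epsilon$ and solving for $\int_{Y_\epsilon} 6 \, dA_{h_+} = 6\,\mathrm{Area}_{h_+}(Y_\epsilon)$ yields
\begin{align*}
6\,\mathrm{Area}_{h_+}(Y_\epsilon) = 8\pi^2 \chi(Y_\epsilon) &- \int_{Y_\epsilon} \tfrac{|W|^2}{4}\, dA_{h_+} + \int_{Y_\epsilon} \tfrac{|E|^2}{2}\, dA_{h_+} \\
&- \int_{Y_\epsilon} \tfrac{|\mathring{B}|^4}{24}\, dA_{h_+} - \int_{Y_\epsilon} |\mathring{B}|^2\, dA_{h_+} - \mathcal{B}(\partial Y_\epsilon),
\end{align*}
where $\mathcal{B}(\partial Y_\epsilon)$ denotes the Chern transgression boundary term. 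Since $\chi(Y_\epsilon) = \chi(Y)$ for all small $\epsilon$, and since $6\,\mathrm{Area}_{h_+}(Y_\epsilon)$ converges to $6A(Y)$ modulo its $\epsilon$-divergent pieces (by the definition of renormalized area), the theorem reduces to the boundary identity
\begin{equation*}
\lim_{\epsilon \to 0}\Bigl[\mathcal{B}(\partial Y_\epsilon) + (\text{divergent part of } 6\,\mathrm{Area}_{h_+}(Y_\epsilon))\Bigr] = \tfrac{1}{2}\int_Y \Delta^Y |\mathring{B}|^2_{h_+}\, dA_{h_+},
\end{equation*}
together with finiteness of each of the remaining curvature integrals on $Y$.

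\textbf{Finiteness and the hardest step.} The substantive content is the asymptotic analysis at $\partial X$. Using the Fefferman-Graham expansion of $g_+$ in the special defining function $r$ together with the Graham-Witten expansion of $Y$, I would read off the leading behavior of $h_+$, $E$, $|\mathring{B}|^2_{h_+}$, and $\nabla^Y |\mathring{B}|^2_{h_+}$ along $\partial Y_\epsilon$. Finiteness of $\int_Y |W|^2_{h_+}\, dA_{h_+}$ is immediate from the pointwise conformal invariance of the Weyl integrand at weight $-4$ in dimension $4$; the analogous statements for $\int_Y |E|^2_{h_+}$, $\int_Y |\mathring{B}|^4_{h_+}$, and $\int_Y |\mathring{B}|^2_{h_+}$ follow by checking term-by-term that the leading decay rates dominate the growth of the volume form. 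The main obstacle is the boundary identity above: one must expand $\mathcal{B}(\partial Y_\epsilon)$ in a frame adapted to $\partial Y$ and $\partial X$, track its $\epsilon^{-k}$ singular coefficients, match them against the divergent part of $\mathrm{Area}_{h_+}(Y_\epsilon)$, and show that the surviving finite residue equals (via the divergence theorem) the interior integral $\frac{1}{2}\int_Y \Delta^Y |\mathring{B}|^2_{h_+}\, dA_{h_+}$. The evenness of $\dim Y$ (which rules out a logarithmic conformal anomaly), the Einstein condition on $g_+$, and the minimality of $Y$ all enter decisively in this final cancellation, in close analogy with Anderson's proof of \eqref{anderson}.
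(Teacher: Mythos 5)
Your skeleton is the same as the paper's: apply four-dimensional Chern--Gauss--Bonnet to $Y_\epsilon$, use the Gauss equation together with $\mathrm{Ric}_{g_+}=-4g_+$ and minimality to get $R_{h_+}=-12-|\mathring{B}|^2_{h_+}$, square to produce $\tfrac{R^2}{24}=6+|\mathring{B}|^2+\tfrac{|\mathring{B}|^4}{24}$, and solve for $6\,\mathrm{Area}_{h_+}(Y_\epsilon)$. That algebraic core is correct and is exactly \eqref{SC} and \eqref{GB} in the paper. However, the one precise statement you make about the remaining analysis is not well posed: you reduce the theorem to
\begin{equation*}
\lim_{\epsilon \to 0}\Bigl[\mathcal{B}(\partial Y_\epsilon) + \bigl(\text{divergent part of } 6\,\mathrm{Area}_{h_+}(Y_\epsilon)\bigr)\Bigr] = \tfrac{1}{2}\int_Y \Delta^Y |\mathring{B}|^2_{h_+}\, dA_{h_+},
\end{equation*}
but when $\partial Y$ is not umbilic the bracketed quantity cannot converge: your own rearrangement forces it to equal $8\pi^2\chi - (\text{convergent integrals}) - \int_{Y_\epsilon}|\mathring{B}|^2\,dA_{h_+} - 6A(Y) + O(\epsilon)$, and $\int_{Y_\epsilon}|\mathring{B}|^2\,dA_{h_+}$ carries an unmatched $\epsilon^{-1}\oint_{\partial Y}|\mathring{II}|^2_{k_\infty}\,ds_{k_\infty}$ divergence (and the right-hand side $\int_Y\Delta^Y|\mathring{B}|^2\,dA_{h_+}$ is itself divergent in that case, by \cref{1.2}). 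The correct splitting, which the paper carries out, is: (i) the transgression term has \emph{no finite part} at all (\cref{4.3}), and (ii) $\int_{Y_\epsilon}\Delta^Y|\mathring{B}|^2\,dA_{h_+}=-\tfrac{2}{\epsilon}\oint_{\partial Y}|\mathring{II}|^2_{k_\infty}\,ds_{k_\infty}+O(\epsilon)$ with no constant term (equation \eqref{ress}), so that $f.p.\int_{Y_\epsilon}|\mathring{B}|^2\,dA_{h_+}=\int_Y\bigl(\tfrac{1}{2}\Delta^Y|\mathring{B}|^2+|\mathring{B}|^2\bigr)dA_{h_+}$ as a single finite integral. The $\tfrac12\Delta^Y|\mathring{B}|^2$ term enters only through this cancellation of $\epsilon^{-1}$ coefficients, not as the limit of a boundary-plus-area combination.

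Beyond that misformulation, everything you label as ``the main obstacle'' is in fact the entire content of the paper's Sections 4 and 5 and is left undone: the verification that $|E|^2_{h_+}$ and $|\mathring{B}|^4_{h_+}$ are pointwise conformal invariants of weight $-4$ (via the contracted Gauss equation $E_{\alpha\beta}=-B^2_{\alpha\beta}+\tfrac{|B|^2}{4}h_{\alpha\beta}-W^X_{\hat\alpha N\hat\beta N}$, \cref{4.2}), and the asymptotic analysis of the boundary term. On the latter, note one organizational point: you propose to expand the $h_+$-transgression form directly, whereas the paper applies Chern--Gauss--Bonnet in the compactified metric $\overline{h}$ and conformally transforms $\sigma_2(\overline{h}^{-1}\overline{P})$ (\cref{sigma lemma}), converting the difference into an exact divergence that is pushed to the boundary. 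This matters practically: in $\overline{h}$ the slice $\Sigma$ is totally geodesic in $\overline{Y}$ (after choosing the special defining function so that $\eta\equiv 0$), so $\overline{S}_r=O(r)$ and the boundary analysis becomes tractable; in $h_+$ the slices $\Sigma_\epsilon$ are asymptotically umbilic with $H\approx 3$ and the transgression term carries all of the $\epsilon^{-3}$ and $\epsilon^{-1}$ divergences, making the direct extraction of its finite part considerably messier. Without either route being executed, the finiteness claims and the identification of the constant term remain unproven.
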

The following theorem is interesting because it gives a characterization of minimal hypersurfaces with an $L^2$ second fundamental form in terms of conformal geometry at infinity. 
\begin{theorem}[\textbf{Holographic Characterization of $L^2$ Second Fundamental Forms}]\label{1.2}
Let the setup be the same as in \cref{1.1}. Then either 
\begin{enumerate}
\item $\partial Y$ is umbilic in $\partial X,$ which is equivalent to  
\begin{equation*}
\int_Y \Delta^Y|\mathring{B}|^2dA_{h_+}=0, 
\end{equation*}
which is equivalent to $|\mathring{B}|\in L^2(Y).$
\newline \newline Or\newline
\item $\partial Y$ is not umbilic in $\partial X$ which is equivalent to the integral above diverging to $-\infty.$
\end{enumerate}
\end{theorem}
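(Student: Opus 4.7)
The strategy is to work in the compactified metric $h = r^2 h_+$ on $\overline{Y}$ and extract the claimed dichotomy from the asymptotic behavior of $|\mathring{B}|^2_{h_+}$ near $\Sigma = \partial Y$, followed by a divergence-theorem analysis on truncations of $Y$. Conformal covariance of the trace-free second fundamental form as a $(0,2)$-tensor gives the scaling identity $|\mathring{B}|^2_{h_+} = r^2\, |\mathring{B}|^2_h$, where $|\mathring{B}|^2_h$ is smooth up to $\partial Y$. Using the Fefferman--Graham expansion of $g_+$ together with the orthogonality of $Y$ with $\partial X$, one identifies the boundary restriction $|\mathring{B}|^2_h|_\Sigma$ with a positive multiple of the pointwise squared norm of the trace-free second fundamental form of $\Sigma$ in $\partial X$. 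Consequently $\Sigma$ is umbilic in $\partial X$ iff the tensor $\mathring{B}_h$ vanishes along $\Sigma$, iff $|\mathring{B}|^2_h = O(r^2)$, iff $|\mathring{B}|^2_{h_+} = O(r^4)$ near $\Sigma$.

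For the $L^2$ equivalence, write $|\mathring{B}|^2_{h_+}\, dA_{h_+} = r^{-2}\,|\mathring{B}|^2_h \, dA_h$ using $dA_{h_+} = r^{-4}\, dA_h$, and note that $dA_h$ extends smoothly to $\overline{Y}$ with finite total area; hence this is $L^1$ near $\Sigma$ iff $|\mathring{B}|^2_h$ has a zero of order $2$ along $\Sigma$, which is exactly the umbilicity condition. For the Laplacian integral, apply the divergence theorem on $Y_\epsilon = Y \cap \{r \geq \epsilon\}$:
\[
\int_{Y_\epsilon} \Delta^Y |\mathring{B}|^2_{h_+}\, dA_{h_+}
= -\int_{Y \cap \{r = \epsilon\}} \frac{h_+\bigl(\nabla^{h_+} r,\, \nabla^{h_+} |\mathring{B}|^2_{h_+}\bigr)}{|\nabla^{h_+} r|_{h_+}}\, d\sigma_{h_+}.
\]
Using $|\nabla^{h_+} r|_{h_+} = r + O(r^3)$ and $d\sigma_{h_+} = \epsilon^{-3}\, d\sigma_h$ on $\{r = \epsilon\}$, together with $|\mathring{B}|^2_{h_+} = r^2|\mathring{B}|^2_h$, direct computation shows the boundary integrand equals $-\tfrac{2}{\epsilon}\, |\mathring{B}|^2_h|_{r = \epsilon} + O(1)$. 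In the umbilic case $|\mathring{B}|^2_h = O(r^2)$, the $1/\epsilon$ factor is killed and the boundary integral tends to $0$; in the non-umbilic case $|\mathring{B}|^2_h|_\Sigma$ has strictly positive integral on $\Sigma$, and the boundary integral tends to $-\infty$.

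Consistency with \cref{1.1} is automatic: the finiteness asserted there forces the $1/\epsilon$ divergences of $\tfrac12\Delta^Y|\mathring{B}|^2_{h_+}$ and $|\mathring{B}|^2_{h_+}$ to cancel in the non-umbilic case, matching the sign and leading coefficient found above, and reduces to $0 + 0$ in the umbilic case. The main obstacle is the asymptotic identification of $|\mathring{B}|^2_h|_\Sigma$ with the umbilicity data of $\Sigma$ in $\partial X$; this requires a careful combination of the Fefferman--Graham expansion, the graphical expansion of $Y$ near $\Sigma$ forced by orthogonality and minimality, and the conformal transformation law for the second fundamental form. This expansion is the core geometric input and I would expect it to be established in the preliminaries of Section 2 (and indeed used in the proof of \cref{1.1}), after which the integration-by-parts computation above is routine.
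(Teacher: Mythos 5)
Your proposal is correct and follows essentially the same route as the paper: the paper's Theorem 3.2 (proved in Section 5) establishes exactly the expansion $|\mathring{B}|^2_{h_+}=r^2|\mathring{II}|^2_{k_\infty}+O(r^4)$ via the conformal transformation law for $B$ and the graphical/Fefferman--Graham expansions, then applies the divergence theorem on $Y_\epsilon$ to get $\int_{Y_\epsilon}\Delta^Y|\mathring{B}|^2_{h_+}\,dA_{h_+}=-2\epsilon^{-1}\oint_{\partial Y}|\mathring{II}|^2_{k_\infty}\,ds_{k_\infty}+O(\epsilon)$, from which the dichotomy is read off in Section 3. The one point to handle with care is that your $O(1)$ remainder in the boundary integrand must in fact be $O(\epsilon)$ in order to conclude the exact vanishing $\int_Y\Delta^Y|\mathring{B}|^2\,dA_{h_+}=0$ (rather than mere finiteness) in case (1); this follows from the evenness the paper verifies, namely $|\overline{B}|^2_{\overline{h}}=|\mathring{II}|^2_{k_\infty}+O(r^2)$ with no linear term and $\partial_{\hat r}\sqrt{\det\overline{k}_r}=0$ at $r=0$.
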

\begin{remark}
Renormalized area can be defined for a more general class of hypersurfaces than what we refer to as Graham-Witten hypersurfaces above. These results will also hold for those hypersurfaces. These can be defined as complete, non-compact, minimal hypersurfaces of $(X,g_+)$ which are asymptotic to a hypersurface contained in the boundary of $X,$ such that under compactification they can be represented locally near their asymptotic boundary as graphs of functions with sufficiently regular polyhomogeneous expansions.
\end{remark}
This article is organized as follows:\newline\indent
In \cref{Intro} we have already stated the main results of the paper.
In \cref{Prelim} we cover preliminaries consisting of a brief introduction to Poincar\'e-Einstein manifolds and renormalized volume and area. We only cover enough background so as to make this paper essentially self-contained and to set the conventions, notation and coordinate systems that will be used for the rest of the paper. For a more extensive treatment of renormalized volume$\backslash$area please see \cite{graham1999volume} and \cite{GW}. In \cref{Main Proof} we state two preliminary results that can be thought of as stepping stones towards the main results of this paper. We also show that these preliminary results imply the main results. \cref{3.1 Proof} is the most technically involved portion of the paper, this is where we prove the first stepping stone result stated in \cref{Main Proof}; \cref{3.1}. In \cref{Proof 3.2} we prove the second such result; \cref{3.2}. \cref{Remarks} consists of some final remarks and thoughts of the author as this paper comes to a close. \cref{Ack} contains acknowledgements.

\section{Preliminaries}   
\label{Prelim}
A complete Riemannian metric $g_+$ on the interior $\mathring{X}^{n+1}$ of a compact manifold with boundary $X$ is said to be conformally compact if $\overline{g} := r^2g_+$ extends as a metric to $X$, where $r$ is a defining function for $M = \partial X.$ By a defining function for $M$ we mean  \begin{align*} r &> 0 \hspace{2mm}\text{on $\mathring{X},$} \\
  r &= 0 \hspace{2mm}\text{on $M,$}\\ dr &\neq 0 \hspace{2mm}\text{on $M.$}
 \end{align*} We call $[g_+]_{\infty}:=[\overline{g}|_{M}]$ the conformal infinity of $(X,g_+),$ where $[\overline{g}|_{M}]$ is the conformal class of $\overline{g}|_{M}.$ This is clearly a well-defined invariant of $g_+$ as $r^2g_+|_{M}$ is conformal to $s^2g_+|_{M}$ for any two defining functions $r$ and $s.$ A less obvious invariant of $g_+$ is $|dr|_{r^2g_+}^2\Big{|}_M$ which is independent of defining function $r.$ Conformally transforming the Riemann tensor of $g_+=:g$ yields
 \begin{equation}\label{1}
 R_{ijkl}^{g_+}=-|dr|^2_{\overline{g}}(g_{ik}g_{jl}-g_{il}g_{jk})+O_{ijkl}(r^{-3}),    
 \end{equation}
 where $O_{ijkl}(r^{-3})$ is the component function of a covariant 4-tensor and is $O(r^{-3})$ as $r\to 0.$ Notice that this implies that the tensor corresponding to those components vanishes with respect to the $g_+-\text{norm}$ as $r\to 0$ since $(g_+)^{ij}=r^2\overline{g}^{ij}.$ 
 This implies that $Sec_p\to -|dr|^2_{r^2g_+}$ as $p\to M.$ \par
  Asymptotically hyperbolic manifolds are complete manifolds that have asymptotic sectional curvatures equal to $-1.$  A special defining function for $M$ is a defining function $r$ such that  $|dr|_{\overline{g}}^2\equiv 1$ in a neighborhood of $M$; rather than just on $M$ itself. It is a lemma of Graham's \cite{graham1999volume} that for every metric $g_{\infty}$ in the conformal infinity of an asymptotically hyperbolic metric $g_+$ there is a special defining function, $r$, such that $r^2g_+|_M=g_{\infty}.$ A conformally compact manifold that also satisfies the Einstein condition, $\text{Ric}(g_+)=-ng_+$ is called Poincar\'e-Einstein. For such a manifold, it can be checked by contracting \eqref{1} that $|dr|_{\overline{g}}^2\Big{|}_M=1.$ It follows that  
 Poincar\'e-Einstein manifolds are special cases of asymptotically hyperbolic manifolds. \par A defining function determines for some $r_0$ an identification of $M\times [0,r_0)$ with a neighborhood of $M$ in $X:$ $(p,t)\in M\times [0,r_0)$ corresponding to $\phi(p,t),$ where $\phi$ is the flow of $\nabla_{\overline{g}}r.$ If  $r$ is a special defining function then $r(\phi(p,t))=t.$ So we can think about the $t$ coordinate as just being $r$ and $\nabla
_{\overline{g}}r$ is orthogonal to the slices $M\times \{t\}.$ The metric $\overline{g}$ then takes the form $$\overline{g}=g_r+dr^2$$ where $g_r$ is a 1-parameter family of metrics on $M$ (note that in our notation this implies $g_0=g_{\infty}$). Using the Einstein condition, one can compute what is known as the Fefferman-Graham expansion for $g_r.$ When $n$ is odd, this takes the form:\newline
$$g_r=g_{0}+g^{(2)}r^2+\text{(even powers)}+g^{(n-1)}r^{n-1}+g^{(n)}r^n+O(r^{n+1}),$$   
\newline
where $g^{(i)}$ are tensors on $M$ that are formally determined for $1\leq j \leq n-1,$ and $g^{(n)}$ is trace-free with respect to any metric in $[\overline{g}|_{M}]$. In $M\times [0,r_0)_r$ we can write the volume form of $g_+$ as
\begin{equation*}
   d\nu_{g_+}=r^{-n-1} \bigg(\frac{\mathrm{det}g_r}{\mathrm{det} g_0}\bigg)^{1\slash  2} d\nu_{g_0} dr.
\end{equation*}
Then using Jacobi's formula 
\begin{equation*}
    \frac{d}{dt}(det A(t))= (det A(t))tr(A^{-1}(t)A'(t))
\end{equation*}
coupled with the locally determined terms of the Fefferman-Graham expansion allows us to compute
\begin{equation}
\bigg(\frac{\mathrm{det}g_r}{\mathrm{det}g_0}\bigg)^{1\slash  2}=1+\nu^{(2)}r^2+\text{(even terms)}+\nu^{(n)}r^n+O(r^{n+1})
\end{equation}
where $\nu^{(j)}$ are determined functions on $M$ with $\nu^{(n)}=0$ if $n$ is odd, which we now assume. Next we consider the asymptotics of $\mathrm{Vol}_{g_{+}}(r>\epsilon).$ Let $C=\mathrm{Vol}_{g_{+}}(r>r_0),$ then 
\begin{align}
\mathrm{Vol}_{g_+}(r>\epsilon)&= \int_{r >\epsilon}d\nu_{g_+}\\\nonumber
&=C+\int_{\epsilon}^{r_0}\int_{M}[1+\nu^{(2)}r^2+\text{(even terms)}+\nu^{(n)}r^n+O(r^{n+1})]r^{-n-1}d\nu_{g_0} dr\\\nonumber &=c_0\epsilon^{-n}+c_2\epsilon^{-2+n}+\text{(odd powers)}+c_{n-1}\epsilon^{-1}+V+O(\epsilon).
\end{align}
An essential property of the renormalized volume $V$ is
that it does not depend on this choice of special defining function. This is generally not true if $X$ is odd-dimensional; see \cite{graham1999volume} for the proof. The argument in \cite{graham1999volume} shows that for $n$ odd the renormalized volume of $g_+$ is well-defined but since renormalized volume is a non-local invariant, finding an explicit expression is non-trivial. 
\par
Graham and Witten \cite{GW} outlined an analogous theory to that of renormalized volume but for a class of minimal submanifolds of Poincar\'e-Einstein manifolds. Here we sketch how that outline looks in the particular setting that we will be in for this paper. Let $(X^5,g_+)$ be a Poincar\'e-Einstein space and let $(Y^4,h_+)$ be a complete minimal hypersurface dividing $X$ into two pieces $X^-$ and $X^+$ such that $Y\cap \partial X=\Sigma^3 \neq \emptyset$ (this is what we mean by a Graham-Witten minimal hypersurface).  Let $(\Sigma,k_{\infty})$ be the boundary of $Y$ with $k_{\infty}$ being the induced metric. Given a choice of special defining function $r$ for $\partial X$ let $\{x^{\mu}\}$ be coordinates on a neighborhood $U_0$ in $M$ that intersects $\Sigma$ non-trivially, in particular choose coordinates such that $\Sigma\cap U_0=\{x^4=0 \}$ and $|\partial_4|_{g_{\infty}}\equiv 1.$ This can be done by writing $M=M^-\cup M^+$ where $M^-\cap M^+=\Sigma$ and defining
\begin{equation}
  x^4(q)= 
  \begin{cases}
  \hspace{2mm}d_{g_0}(q,\Sigma) &\text{if $q\in U_0\cap M^+$ } \\
   -d_{g_0}(q,\Sigma) &\text{if $q\in U_0\cap M^-$ }.
  \end{cases}
\end{equation}
Extend these coordinates to a local system on $X$ via the flow of $\nabla_{\overline{g}}r.$ Call this extended neighborhood $U.$ Because $r$ is a special defining function for $M$ we know that this coordinate extension is $\{x^i,r \}.$ We let $0\leq i,j \leq 4,$ $1\leq \mu,\lambda \leq 4$ and $ 1\leq a,b \leq 3.$ We let $r$ correspond to the index $0$ and $0\leq \alpha ,\beta \leq 3.$ Now we let $z\colon \mathbb{R}^4\to \mathbb{R} $ be a smooth function such that $Y\cap U=\{p\in U : x^4(p)=z(x^a(p),r(p))\}.$ This induces a natural local parametrization for $Y,$  \begin{equation} \phi(x^a,r):=(x^a, z(x^a,r),r).
\end{equation}
Note that our tuples are ordered such that the $0^{th}$ coordinate comes last. 
$\phi$ induces coordinates $\{ \hat{x}^{\alpha} \}=\{\hat{x}^{a}, \hat{r} \}=\{x^{a}|_Y, r|_Y \}$  on $Y.$ The induced coordinate basis for $TY$ is given by $\phi_{*}(\partial_{\alpha})=\partial_{\hat{\alpha}}.$ We will omit the hat from coordinate functions when the context is clear but not from the basis vectors. From now on, when we write a Greek index on an induced metric, we will mean this basis i.e. \begin{align}
    h_{\alpha\beta}&:=g(\phi_{*}(\partial_{\alpha}), \phi_{*}(\partial_{\beta})).
\end{align} 
Furthermore, whenever an index attached to an object on $TY$ appears without a hat, then we are suppressing the hat notation. So for example if $B$ is the scalar second fundamental form of $Y$ then $ ``B(\partial_{\alpha},\partial_{\beta})”$ is not defined so by $B_{\alpha\beta}$ we mean to refer to the object viewed with respect to the basis $\{\phi_{*}(\partial_{\alpha}), \phi_{*}(\partial_{r})\}=\{\partial_{\hat{\alpha}}, \partial_{\hat{r}}\}$ i.e. $B_{\alpha\beta}=B(\partial_{\hat{\alpha}},\partial_{\hat{\beta}}).$ 
It is straightforward to compute
\begin{equation*}
    \phi_{*}(\partial_{\alpha})=\partial_{\alpha}+z_{\alpha}\partial_{4},
\end{equation*}
\begin{equation*}
h_{\alpha\beta}=g_{\hat{\alpha}\hat{\beta}}=g_{\alpha\beta}+g_{4\alpha}z_{\beta}+g_{4\beta }z_{\alpha}+g_{44}z_\alpha z_{\beta},
\end{equation*}
which gives
\begin{align}\label{h}\overline{h}_{ab}&=\overline{g}_{ab}+\overline{g}_{4a}z_{b}+ \overline{g}_{4b}z_{a} +\overline{g}_{44}z_{a}z_{b},\\\nonumber
 \overline{h}_{ar}&=\overline{g}_{4a}z_{r}+\overline{g}_{44}z_{a}z_{r},\\\nonumber
 \overline{h}_{r r}&=1+\overline{g}_{44}z_{r}z_{r}.
 \end{align}
Now consider the minimal surface equation on 
$V_{\epsilon}^{r_0}:=\phi^{-1}(U\cap Y\cap\{r_0>r>\epsilon \}),$ for $\epsilon\in (0,r_0)$ then we get
\begin{equation*}\delta \int_{V^{r_0}_{\epsilon}} \sqrt{\mathrm{det}\hspace{0.5mm} h} dxdr=\frac{1}{2} \int_{V_{\epsilon}^{r_0}} \sqrt{\mathrm{det} \hspace{0.5mm}h}h^{\alpha\beta}\delta h_{\alpha\beta} dxdr=0.
\end{equation*}
The formula for the variation of the induced metric in our collar neighborhood is given by 
\begin{align*}
    \delta h_{\alpha\beta}=&\big[g_{\alpha\beta,4}+g_{4\alpha,4}z_{,\beta}+g_{4\beta,4}z_{,\alpha}+g_{44,4}z_{,\alpha}z_{,\beta} \big]\delta z\\\nonumber
    &\hspace{0mm}+g_{4\alpha}\delta z_{,\beta}+g_{4\beta}\delta z_{,\alpha}+g_{44}z_{,\alpha}\delta z_{,\beta}+g_{44}z_{,\beta}\delta z_{,\alpha}.
\end{align*}
It follows that if $Y$ is a critical point of the area functional then 
\begin{equation*}
    \bigg[\partial_{\beta}+\frac{1}{2}(\log(\mathrm{det}\hspace{0.5mm} h))_{,\beta} \bigg] \bigg[ h^{\alpha\beta}\bigg(g_{\alpha 4}+g_{44}z_{,\alpha} \bigg) \bigg]-\frac{1}{2}h^{\alpha\beta}\bigg[g_{\alpha\beta,4}+2g_{\alpha 4,4}z_{,\beta}+g_{44,4}z_{,\alpha}z_{,\beta} \bigg]=0.
\end{equation*}
If we expand the metric in this equation in terms of the $rr$ terms, mixed $ar$ terms and terms that only involve $ab$ and let $L=\log(\mathrm{det}\hspace{0.5mm}h)$ and drop the comma notation for derivatives of functions when there is no ambiguity, we get 
\begin{align*}
&\hspace{4mm}\big[\partial_{b}+\frac{1}{2}L_{b}\big]\big[\overline{h}^{rb}\overline{g}_{44}z_{r}+\overline{h}^{ab}(\overline{g}_{a 4}+\overline{g}_{44}z_{a})\big] \\\nonumber
&+\big[\partial_{r}-4\frac{1}{r}+\frac{1}{2}L_r\big]\big[\overline{h}^{rr}\overline{g}_{44}z_{r}+\overline{h}^{a r}(\overline{g}_{a 4}+\overline{g}_{44}z_{a})\big] \\\nonumber
&-\frac{1}{2}\overline{h}^{ab}\big[\overline{g}_{ab,4}+2\overline{g}_{a 4,4}z_{b}+\overline{g}_{44,4}z_{a}z_{b}\big] \\\nonumber
&-\overline{h}^{a r}\big[\overline{g}_{a 4,4}z_{r}+\overline{g}_{44,4}z_{a}z_{r} \big] \\\nonumber
&-\frac{1}{2}\overline{h}^{rr}\big[\overline{g}_{44,4}z_{r}z_{r} \big]=0.
\end{align*}
From this equation, one can derive
\begin{equation}\label{z}
  z(x^a,r)=\frac{\eta}{6}r^2+z^{(4)}(x^a)\frac{r^4}{4!}+O(r^5),
\end{equation}
where $\eta$ is the mean curvature of $\Sigma$ in $M$ and $z^{(4)}$ is a locally determined function on $\Sigma.$ 
Then using Jacobi's formula 
\begin{equation*}
    \frac{d}{dr}(\mathrm{det}\hspace{0.5mm} h)= (\mathrm{det}\hspace{0.5mm} h)h^{\alpha\beta}h'_{\alpha\beta}
\end{equation*}
along with \eqref{h} and letting $h_0=\overline{h}|_{T\Sigma}$ we can get a local expression for the area form 
\begin{equation*}
dA_{h_+}=r^{-4}\sqrt{\frac{\mathrm{det}\hspace{0.5mm} \overline{h}}{\mathrm{det}\hspace{0.5mm} h_0}} ds_{k_{\infty}} dr
\end{equation*}
\begin{equation}
\bigg(\frac{\mathrm{det}\hspace{0.5mm} \overline{h}}{\mathrm{det}\hspace{0.5mm} h_0}\bigg)^{1\slash  2}=1+\alpha^{(2)}r^2+O(r^{4}).
\end{equation}
Where $\alpha^{(2)}$ is a formally determined function on $\Sigma$.
If we define $Y_{\epsilon}:=Y\cap \{r\geq\epsilon \}$ and let $C=\mathrm{Area}_{h_+}(Y_{r_0})$ then this yields the expansion 
\begin{align}
\mathrm{Area}_{h_+}\big(Y_{\epsilon}\big)&=\int_{Y_{\epsilon}} dA_{h_+}\\\nonumber
&=C+\int_{\epsilon}^{r_0}\int_{\Sigma}[1+\alpha^{(2)}r^2+O(r^4)]r^{-4}ds_{k_{\infty}} dr\\\nonumber
&=c_{0}\epsilon^{-3}+c_1\epsilon^{-1}+A+O(\epsilon).
\end{align}
Where $c_0$ and $c_1$ are integrals over $\Sigma$ of locally determined functions and the constant term is an invariant of $h_+$ that we call the renormalized area of $h_+;$ see \cite{GW} for a proof.
\section{Proof of Main Results}
\label{Main Proof}
Let $g_{\infty}$ be a representative for the conformal infinity of $g_+$ such that $\partial Y$ is minimal in $\partial X$ with respect to $g_{\infty}$ and let $r$ be a special defining function for $\partial X$ such that $\overline{g}:=r^2g_+$ satisfies $\overline{g}|_{TM}=g_{\infty}.$ This $r$ exists since it can be obtained by solving the following equation: given $g_0\in [g_+]_{\infty},$ let $s$ be a function on $M$ such that

\begin{equation*}
    \big(H-(\nabla_{g_0} \log s)^{\perp} \big)=0
\end{equation*}
where $H$ is mean curvature of $\partial Y$ as a submanifold of $(M,g_0)$. By the conformal transformation law for mean curvature, $s^2g_0$ would have $0$ mean curvature on $\partial Y$ as a submanifold of $M.$
Then by Graham's Lemma there exists a unique special defining function $r$ giving us \begin{equation*}
    r^2g_+|_{TM}=s^2g_0.
\end{equation*}
We call $r$ a special-special defining function or special${}^2$ defining function.
With this choice made, we first we prove the following intermediate result:
\begin{theorem}\label{3.1}
 Let the setup be the same as in \cref{1.1} and choose a special${}^{\textit{2}}$ defining function $r$ as outlined above. Then \begin{align}\label{res}6A(Y)=8\pi^2\chi(Y)&-\int_{Y}\frac{|W|^2_{h_+}}{4}dA_{h_+}+\int_{Y}\frac{|E|^2_{h_+}}{2}dA_{h_+}\\\nonumber&-\int_{Y}\frac{|\mathring{B}|_{h_+}^4}{24}dA_{h_+}-f.p.\int_{Y_\epsilon}|\mathring{B}|_{h_+}^2dA_{h_+}
 \end{align}
where $Y_{\epsilon}=Y\cap \{r\geq\epsilon\}$ and $f.p.\int_{Y_{\epsilon}}|\mathring{B}|_{h_+}^2dA_{h_+}$ is the finite part as $\epsilon\to 0$ of $\int_{Y_{\epsilon}}|\mathring{B}|_{h_+}^2dA_{h_+}.$
\end{theorem}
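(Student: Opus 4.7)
The plan is to apply the Chern--Gauss--Bonnet theorem for a compact 4-manifold with boundary to the truncated pieces $Y_{\epsilon}=Y\cap\{r\geq\epsilon\}$ and then analyze the $\epsilon\to 0$ asymptotics of every resulting term. For a compact $N^{4}$ with boundary,
\begin{equation*}
8\pi^2\chi(N)=\int_{N}\Big(\tfrac{|W|^{2}}{4}-\tfrac{|E|^{2}}{2}+\tfrac{R^{2}}{24}\Big)dV+\mathcal{B}(\partial N),
\end{equation*}
where $\mathcal{B}(\partial N)$ is the Chern transgression, a local polynomial in the intrinsic curvature of $\partial N$ and its second fundamental form in $N$. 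Applying this to $Y_{\epsilon}$ with the special${}^{2}$ defining function $r$ already fixed, and using $\chi(Y_{\epsilon})=\chi(Y)$ for all small $\epsilon$ (since $Y_{\epsilon}$ deformation retracts onto $Y$), gives
\begin{equation*}
8\pi^2\chi(Y)=\int_{Y_{\epsilon}}\Big(\tfrac{|W|^{2}}{4}-\tfrac{|E|^{2}}{2}+\tfrac{(R^{Y})^{2}}{24}\Big)dA_{h_{+}}+\mathcal{B}(\epsilon).
\end{equation*}

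Next, I would rewrite the scalar term using the Gauss equation. Since $Y$ is minimal ($H=0$) and $X$ is Poincar\'e--Einstein with $\mathrm{Ric}^{g_{+}}=-4g_{+}$ and $R^{g_{+}}=-20$, one obtains
\begin{equation*}
R^{Y}=R^{g_{+}}-2\,\mathrm{Ric}^{g_{+}}(\nu,\nu)+H^{2}-|B|^{2}=-12-|\mathring B|^{2},
\end{equation*}
so $(R^{Y})^{2}/24=6+|\mathring B|^{2}+|\mathring B|^{4}/24$. Substituting this and separating the constant $6$ as a multiple of the area form yields
\begin{equation*}
8\pi^2\chi(Y)=\int_{Y_{\epsilon}}\tfrac{|W|^{2}}{4}dA-\int_{Y_{\epsilon}}\tfrac{|E|^{2}}{2}dA+6\,\mathrm{Area}_{h_{+}}(Y_{\epsilon})+\int_{Y_{\epsilon}}|\mathring B|^{2}dA+\int_{Y_{\epsilon}}\tfrac{|\mathring B|^{4}}{24}dA+\mathcal{B}(\epsilon).
\end{equation*}
The structure of \eqref{res} is now visible: the $6A(Y)$ on the right-hand side will come from the finite part of $6\,\mathrm{Area}_{h_{+}}(Y_{\epsilon})$, and the $f.p.$ integral will come from the finite part of the $|\mathring B|^{2}$ term.

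Passing to $\epsilon\to 0$ then requires three pieces. First, convergence of the interior integrals of $|W|^{2}$, $|E|^{2}$, and $|\mathring B|^{4}$ against $dA_{h_{+}}$: the first is immediate from the pointwise conformal invariance of $|W|^{2}dV$ in dimension $4$, which identifies the integral with one against the smoothly compactified metric $\overline{h}$ on $\overline{Y}$; the other two follow from the decay of $E$ and $\mathring B$ implied by the Fefferman--Graham expansion of $g_{r}$ and the expansion \eqref{z} for $z$. Second, the polyhomogeneous expansion $\mathrm{Area}_{h_{+}}(Y_{\epsilon})=c_{0}\epsilon^{-3}+c_{1}\epsilon^{-1}+A(Y)+o(1)$, which is precisely how $A(Y)$ is defined, together with an analogous expansion for $\int_{Y_{\epsilon}}|\mathring B|^{2}dA_{h_{+}}$ (derived from the expansion of $\overline h$ and $|\mathring B|^{2}$ near $\partial Y$) that defines the $f.p.$ integral.

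The main obstacle is the third ingredient: expanding the boundary transgression $\mathcal{B}(\epsilon)$ as $\epsilon\to 0$. Using the expansions of $g_{r}$ and $z$, one must compute the $h_{+}$-induced metric on $\partial Y_{\epsilon}$, the second fundamental form of $\partial Y_{\epsilon}\subset Y_{\epsilon}$ with respect to $h_{+}$, and the intrinsic 3D curvature of $\partial Y_{\epsilon}$, then substitute into the transgression formula and expand term by term in $\epsilon$. What must be shown is that the divergent parts of $\mathcal{B}(\epsilon)$ cancel exactly with those of $6\,\mathrm{Area}_{h_{+}}(Y_{\epsilon})+\int_{Y_{\epsilon}}|\mathring B|^{2}dA_{h_{+}}$, and that $\mathcal{B}(\epsilon)$ contributes nothing to the finite part. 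This is the technically heavy bookkeeping that presumably fills \cref{3.1 Proof}; here the special${}^{2}$ choice of defining function, which forces $\partial Y$ minimal in $(\partial X,s^{2}g_{0})$, is expected to be essential in killing leading-order coefficients of the transgression expansion so that the cancellation is clean.
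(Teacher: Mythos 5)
Your proposal is correct in outline and follows the same overall architecture as the paper: Chern--Gauss--Bonnet on the truncated piece $Y_\epsilon$, the Gauss equation to convert $(R^Y)^2/24$ into $6+|\mathring B|^2+|\mathring B|^4/24$ (your computation $R^Y=-12-|\mathring B|^2$ matches the paper's exactly), and an $\epsilon\to 0$ analysis in which the boundary contribution is shown to carry no finite part. The one genuine difference is the conformal representative in which Chern--Gauss--Bonnet is invoked. You apply it directly to $(Y_\epsilon,h_+)$, so your $\mathcal{B}(\epsilon)$ is the transgression of the singular metric; the paper applies it to the compactification $(Y_\epsilon,\overline h)$ and then proves a conformal transformation identity (\cref{sigma lemma}) writing $4\sigma_2(\overline h^{-1}\overline P)=4r^{-4}\sigma_2(h_+^{-1}P_+)+2\nabla_{\overline h}^{\hat\alpha}(\cdots)_{\hat\alpha}$, pushing the divergence to the boundary by Stokes. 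The two resulting boundary terms are equal, but the paper's organization buys a far more tractable expansion: with the special${}^2$ defining function, $\Sigma$ is totally geodesic in $(\overline Y,\overline h)$, so $\overline S_r=O(r)$, and only the explicit flux terms $r^{-3}|dr|^2_{\overline h}$, $r^{-2}\Delta_{\overline h}r$, etc.\ need to be checked for absent finite-part coefficients. In your formulation the transgression is built from quantities that do not decay --- the $h_+$-second fundamental form of $\Sigma_\epsilon$ is asymptotically umbilic with $O(1)$ eigenvalues and $ds_{k_\epsilon}\sim\epsilon^{-3}ds_{k_\infty}$ --- so each of its terms contributes at several orders and the verification that no finite part survives is substantially messier, though not wrong; note also that the cancellation of the divergent parts you propose to check is automatic, since the left-hand side $8\pi^2\chi(Y_\epsilon)$ is $\epsilon$-independent. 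Finally, your convergence claims for $\int_Y|E|^2\,dA_{h_+}$ and $\int_Y|\mathring B|^4\,dA_{h_+}$ need more than ``decay from the Fefferman--Graham expansion'': the paper's argument is that $|E|^2_{h_+}$ and $|\mathring B|^4_{h_+}$ are pointwise conformal invariants of weight $-4$, obtained from the traced Gauss equation $E_{\alpha\beta}=-B^2_{\alpha\beta}+\tfrac{|B|^2}{4}h_{\alpha\beta}-W^X_{\hat\alpha N\hat\beta N}$ together with minimality, and something of that precision is needed to close this step.
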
  \noindent Note: The above formula actually holds for an arbitrary special defining function $r$ but \cref{3.1} as stated is a more straightforward stepping stone to the main results.
\par The strategy for proving this is to begin with the Chern-Gauss-Bonnet formula for 4-dimensional manifolds with boundary applied to $(\overline{Y},\overline{h}).$ We proceed by conformally transforming all of the terms in correspondence to the change $\overline{h}\to h_+.$ One of these terms then gives us a constant, 6; this is where the $6A(Y)$ term in our Gauss-Bonnet formula ultimately comes from. We then analyze the asymptotics of each term as $\epsilon \to 0$ and arrive at the result in \cref{3.1}.
The next result we need is one that gives us an explicit expression for $f.p.\int_{Y_{\epsilon}}|\mathring{B}|_{h_+}^2dA_{h_+};$ this can be thought of as the renormalized $L^2(Y)$ norm of the second fundamental form of $Y.$
\begin{theorem}\label{3.2}
Let the setup be the same as that in \cref{1.1} and choose a special${}^{\textit{2}}$ defining function $r$ as outlined above. Then
\begin{equation}
|\int_{Y}\frac{\Delta^Y |\mathring{B}|_{h_+}^2}{2}+|\mathring{B}|_{h_+}^2dA_{h_+}|<\infty
\end{equation}
and
\begin{equation}\label{B expansion}
    \int_{Y_{\epsilon}}|\mathring{B}|_{h_+}^2dA_{h_+}=\epsilon^{-1}\oint_{\partial Y} |\mathring{II}|^2_{k_{\infty}} ds_{k_{\infty}}+\int_{Y}\frac{\Delta^Y |\mathring{B}|_{h_+}^2}{2}+|\mathring{B}|_{h_+}^2dA_{h_+}+O(\epsilon),
\end{equation}
where $\mathring{II}$ is the trace-free second fundamental form of $\partial Y$ in $M$ with respect to the compactified metric restricted to the ambient boundary and $k_{\infty}$ is this metric restricted to the boundary of $Y.$ 
\end{theorem}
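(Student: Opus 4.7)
The plan is to establish both claims from an asymptotic expansion of $|\mathring{B}|^2_{h_+}$ near $\partial Y$, combined with a Stokes's theorem calculation on $Y_\epsilon$. The divergence of $\int_{Y_\epsilon}|\mathring{B}|^2_{h_+}\,dA_{h_+}$ as $\epsilon\to 0$ will be matched by a compensating divergence in $\tfrac{1}{2}\int_{Y_\epsilon}\Delta^Y|\mathring{B}|^2_{h_+}\,dA_{h_+}$, and the cancellation will yield both finiteness and the expansion \eqref{B expansion}.

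First I would use the conformal transformation law for trace-free second fundamental forms, which under $h_+=r^{-2}\overline{h}$ gives $\mathring{B}^{h_+}_{\alpha\beta}=r^{-1}\mathring{B}^{\overline{h}}_{\alpha\beta}$ and hence $|\mathring{B}|^2_{h_+}=r^2|\mathring{B}^{\overline{h}}|^2_{\overline{h}}$. Since $\mathring{B}^{\overline{h}}$ extends smoothly up to $\partial Y$, it remains to identify its restriction and low-order $r$-corrections. The special${}^2$ choice of $r$ forces $\eta=0$ in \eqref{z}, so that $z=O(r^4)$ and $\phi_*(\partial_a)|_{r=0}=\partial_a$, $\phi_*(\partial_r)|_{r=0}=\partial_r$, with $\overline{N}|_{\partial Y}=\partial_4$. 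A direct Christoffel-symbol computation, in which the evenness of the Fefferman--Graham expansion of $g_r$ kills the first-order $r$-derivatives of $\overline{g}$, then shows that $\mathring{B}^{\overline{h}}_{\alpha\beta}|_{\partial Y}$ is supported only on the tangential block, with $\mathring{B}^{\overline{h}}_{ab}|_{\partial Y}=II^{\Sigma}_{ab}=\mathring{II}_{ab}$ (the final equality using minimality $H^{\Sigma}=\eta=0$). Propagating only-even powers through \eqref{h} then yields
\begin{equation*}
|\mathring{B}|^2_{h_+}=r^2|\mathring{II}|^2_{k_\infty}+A_2(x)r^4+O(r^6),
\end{equation*}
with no $r^1$ or $r^3$ coefficient.

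Substituting this into $dA_{h_+}=r^{-4}\bigl(1+\alpha^{(2)}r^2+O(r^4)\bigr)\,ds_{k_\infty}\,dr$ produces
\begin{equation*}
\int_{Y_\epsilon}|\mathring{B}|^2_{h_+}\,dA_{h_+}=\epsilon^{-1}\oint_{\partial Y}|\mathring{II}|^2_{k_\infty}\,ds_{k_\infty}+I+O(\epsilon)
\end{equation*}
for some finite constant $I$. For the Laplacian contribution, Stokes's theorem on $Y_\epsilon$ gives
\begin{equation*}
\int_{Y_\epsilon}\Delta^Y|\mathring{B}|^2_{h_+}\,dA_{h_+}=\oint_{\partial Y_\epsilon}\partial_\nu|\mathring{B}|^2_{h_+}\,ds_{h_+},
\end{equation*}
where the outward unit normal (pointing toward smaller $r$) is $\nu=-r\partial_{\hat{r}}+O(r^4)$ and $ds_{h_+}=\epsilon^{-3}(1+O(\epsilon^2))\,ds_{k_\infty}$. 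Substituting the expansion and invoking the absence of an $r^3$ term, the boundary integral evaluates to $-2\epsilon^{-1}\oint_{\partial Y}|\mathring{II}|^2_{k_\infty}\,ds_{k_\infty}+O(\epsilon)$, crucially with no $O(1)$ constant. Adding half of this to $\int_{Y_\epsilon}|\mathring{B}|^2_{h_+}\,dA_{h_+}$ exactly cancels the $\epsilon^{-1}$ divergences and leaves the finite limit $I$, which establishes both the finiteness claim and \eqref{B expansion}.

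The main technical obstacle is confirming the absence of odd-power terms ($r^1$ and $r^3$) in the expansion of $|\mathring{B}^{\overline{h}}|^2_{\overline{h}}$. A spurious such term would, upon integration against $r^{-4}\,dr$, introduce a $\log\epsilon^{-1}$ divergence incompatible with the claimed form of \eqref{B expansion}. Avoiding this requires carefully propagating the evenness of the Fefferman--Graham expansion of $g_r$ through the minimality expansion \eqref{z} and the pullback formulae \eqref{h}, all the way to the trace-free second fundamental form of $Y$.
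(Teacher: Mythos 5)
Your proposal follows essentially the same route as the paper: expand $|\mathring{B}|^2_{h_+}=r^2|\mathring{II}|^2_{k_\infty}+O(r^4)$ using the conformal transformation of the trace-free second fundamental form and the special${}^2$ choice ($\eta=0$, $z=O(r^4)$), integrate against $r^{-4}\,ds\,dr$ to isolate the $\epsilon^{-1}$ divergence, convert the Laplacian term to a boundary integral by Stokes's theorem, and observe that it produces $-2\epsilon^{-1}\oint_{\partial Y}|\mathring{II}|^2\,ds_{k_\infty}$ with no constant term because $\partial_{\hat r}\sqrt{\det \overline{k}_r}$ vanishes at $r=0$. The argument is correct and matches the paper's proof.
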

 \noindent Note: The above formula actually holds for an arbitrary special defining function $r.$\par This result is based on the observation that although the $L^2(Y)$ norm of $\mathring{B}$ is generally infinite, the term in its asymptotic expansion as $\epsilon\to 0$ which blows up is actually equivalent by Stoke's theorem to $$-\frac{1}{2}\int_{Y_{\epsilon}}\Delta^Y |\mathring{B}|_{h_+}^2 dA_{h_+}$$
 and furthermore, the above integral has no constant term in its asymptotic expansion. In other words
 \begin{equation}\label{ress}
 \int_{Y_{\epsilon}}\Delta^Y |\mathring{B}|_{h_+}^2 dA_{h_+}=-\frac{2}{\epsilon}\oint_{\partial Y} |\mathring{II}|^2_{k_{\infty}} ds_{k_{\infty}}+O(\epsilon)
 \end{equation}
 and
 \begin{equation}
 \int_{Y_{\epsilon}}|\mathring{B}|_{h_+}^2dA_{h_+}=\epsilon^{-1}\oint_{\partial Y} |\mathring{II}|^2_{k_{\infty}} ds_{k_{\infty}}+f.p. \int_{Y_{\epsilon}}|\mathring{B}|_{h_+}^2dA_{h_+}+O(\epsilon).
 \end{equation}
This implies that 
\begin{equation}
|\int_{Y} \frac{\Delta^Y |\mathring{B}|_{h_+}^2}{2}+|\mathring{B}|_{h_+}^2 dA_{h_+}| <\infty
\end{equation}
and furthermore, that 
\begin{equation}\label{fpp}
f.p.\int_{Y_{\epsilon}}|\mathring{B}|_{h_+}^2dA_{h_+}=\int_{Y} \frac{\Delta^Y |\mathring{B}|_{h_+}^2}{2}+|\mathring{B}|_{h_+}^2 dA_{h_+}.
\end{equation}
Plugging \eqref{fpp} into \eqref{res} gives \cref{1.1}. To see how \cref{1.2} follows from the above results, consider \eqref{B expansion}:
\begin{equation*}
    \int_{Y_{\epsilon}}|\mathring{B}|_{h_+}^2dA_{h_+}=\epsilon^{-1}\oint_{\partial Y} |\mathring{II}|^2_{k_{\infty}} ds_{k_{\infty}}+\int_{Y}\frac{\Delta^Y |\mathring{B}|_{h_+}^2}{2}+|\mathring{B}|_{h_+}^2dA_{h_+}+O(\epsilon).
\end{equation*}
Taking the limit as $\epsilon\to 0,$ we see that $\underset{\epsilon\to 0}{\lim}\int_{Y_{\epsilon}}|\mathring{B}|_{h_+}^2dA_{h_+} $ is finite if and only if $\mathring{II}\equiv 0,$ i.e. if and only if $\partial Y$ is umbilic in $\partial X.$ Now considering \eqref{ress}, 
it's clear that $\underset{\epsilon\to 0}{\lim}\int_{Y_{\epsilon}}\Delta^Y|\mathring{B}|_{h_+}^2dA_{h_+}=0$ if and only if $\partial Y$ is umbilic in $\partial X$ and that the integral diverges to $-\infty$ otherwise. The next sections are dedicated to the proofs of \cref{3.1} and \cref{3.2}.
\section{Proof of Theorem 3.1}
\label{3.1 Proof}
\subsection{Outline of Approach}The strategy for proving \cref{3.1} is to make a choice of special${}^{2}$ defining function $r$ and to write down the Chern-Gauss-Bonnet formula \cite{chen2009conformal} for $Y\cap \{r>\epsilon\}$ in terms of the compactified metric $\overline{h}=r^2h_+;$ 
\begin{equation*}
8\pi^2\chi(Y_{\epsilon})=\int_{Y_{\epsilon}}\frac{|\overline{W}|^2_{\overline{h}}}{4}+4\sigma_2(\overline{h}^{-1}\overline{P})dA_{\overline{h}}+\oint_{\Sigma_{\epsilon}}\overline{S}_{r}ds_{\overline{k}_{\epsilon}}.
\end{equation*}
Then we conformally transform every term to express them in terms of the singular metric $h_+.$ A constant term then appears in the integrand which gives us a way to obtain a formula for the renormalized area. We know that the left hand-side of the above equation is constant with respect to $\epsilon$ so it must equal the zero-order term of the expansion of the right-hand side. We then pick out the constant terms in the asymptotic expansion of every term in the Chern-Gauss-Bonnet formula after conformally transforming the formula. \subsection{Proof}We put a bar over a metric dependent quantity to indicate that it is taken with respect to the compactified metric. Some objects we will decorate with either a subscript or superscript of the metric they are taken with respect to depending on what is more convenient. In this context
\begin{align*}
X&=(X,g_+),\\ \overline{X}&=(\overline{X},\overline{g}),\\ Y&=(Y,h_+),\\ \overline{Y}&=(\overline{Y},\overline{h}).
\end{align*}In calculations involving indices we use $g_{ij}:=(g_{+})_{ij}.$
\noindent
Now if we take arbitrary coordinates $\{y^{\alpha}\}$ on $Y$ and extend the chart induced basis to $X$ by the unit normal $\mu_Y,$ and let the index $N$ correspond to $\mu_Y$ and let Latin indices run over $\{y^{\alpha},\mu_Y \}$, we see   
\begin{align*}
 -20&=R_X\\
 &=g^{ij}g^{kl} R_{ikjl}^X \\
 &=2g^{ij}R^X_{iNjN}+g^{\alpha\beta}g^{\gamma\delta}R^X_{\alpha\gamma\beta\delta }\\
 &=2Ric_X(\mu_Y,\mu_Y)+	g^{\alpha\beta}g^{\gamma\delta} R^X_{\alpha\gamma\beta\delta}\\
 &=-8g(\mu_Y,\mu_Y)+ g^{\alpha\beta}g^{\gamma\delta} R^X_{\alpha\gamma\beta\delta}\\
&=-8+g^{\alpha\beta}g^{\gamma\delta}R^X_{\alpha\gamma\beta\delta}. \end{align*}
 But the Gauss equation gives us
 
\begin{equation}
\hspace{-7mm}R^X_{\alpha\gamma\beta\delta}=R^Y_{\alpha\gamma\beta\delta}+B_{\alpha\delta}B_{\gamma\beta}-B_{\alpha\beta}B_{\gamma\delta}.
\end{equation}

 \noindent
 Therefore, \begin{align*}
 \hspace{16mm}-20&=-8+g^{\alpha\beta}g^{\gamma\delta}R_{\alpha\gamma\beta\delta}^X \\
 &=-8+	g^{\alpha\beta}g^{\gamma\delta}[R^Y_{\alpha\gamma\beta\delta}+B_{\alpha\delta}B_{\gamma\beta}-B_{\alpha\beta}B_{\gamma\delta}]\\
 &=-8+R_Y+|B|^2_{h_+}-\big[tr_{h_+}B\big]^2\\
 &=-8+R_Y+|B|^2_{h_+}.
\end{align*}
This give us 
\begin{equation*}
\hspace{-30mm}-12=R_Y+|B|^2_{h_+},
\end{equation*}
isolating the curvature term and squaring both sides gives us
\begin{equation}\label{SC}
\hspace{-18mm}(R_Y)^2=|B|^4_{h}
+24|B|^2_{h}+144. 
\end{equation}
As stated in \cite{chen2009conformal}, the Chern-Gauss-Bonnet formula for 4-manifolds with boundary can be expressed as
\begin{equation}\label{cgb}
8\pi^2\chi(Y_{\epsilon})=\int_{Y_\epsilon}\frac{|\overline{W}|^2_{h_+}}{4}+   4\sigma_2(\overline{h}^{-1}\overline{P}) dA_{\overline{h}}+\oint_{\Sigma_{\epsilon}}\overline{S}_rds_{\overline{k}_{\epsilon}},
\end{equation}
where $Y_{\epsilon}=Y\cap\{r\geq\epsilon \},$ $\Sigma_{\epsilon}=Y\cap\{r=\epsilon\}$ 
and \begin{equation}\label{S}\overline{S}_r=R^{\overline{Y}}\overline{H}_{r}-2 Ric^{\overline{Y}}(\overline{\nu}_r,\overline{\nu}_r) \overline{H}_{r}-2R_{\overline{Y}}{}^d{}_{adb}\overline{L}_{r}^{ab}+\frac{2}{3}\overline{H}_{r}^3-2\overline{H}_{r}|\overline{L}_{r}|^2_{\overline{h}}+\frac{4}{3}tr(\overline{L}_{r}^3),
\end{equation}
where $\overline{\nu}_r$ is the inward-pointing $\overline{h}$-unit normal to $\Sigma_r$ in $Y.$ $a,b,c,d $ correspond to coordinates on $\partial Y.$  $\sigma_2(\overline{h}^{-1}\overline{P})(p)$ is the second symmetric function of the endomorphism of $T_pY$ given by $V\to \overline{P}_{\alpha}^{\beta} V^{\alpha}$
and where $\overline{P}$ is the Schouten tensor of $\overline{h};$
\begin{equation*}
    \overline{P}=\frac{1}{2}\bigg(Ric^{\overline{Y}}-\frac{R^{\overline{Y}}}{6}\overline{h} \bigg).
\end{equation*}
And where $\overline{L}_{\epsilon}$ is the second fundamental form of $\Sigma_{\epsilon}$ within $Y_{\epsilon}$ with respect to the inward pointing unit-normal and $\overline{H}_{\epsilon}$ is the trace of $\overline{L}_{\epsilon}$ with respect to $\overline{k}_{\epsilon}:=$ $\overline{h}|_{\Sigma_{\epsilon}}.$
We will first write $4\sigma_2(\overline{h}^{-1}\overline{P})$ in terms of $4\sigma_2(h^{-1}_+P_+)$ ($P_+$ is the Schouten tensor associated to $h_+$). Then we will use
\begin{align}\label{sig}
4\sigma_2(h_+^{-1}P_+)&=-\frac{|Ric^{Y}|^2_{h_+}}{2}+\frac{R^2_{Y}}{6}\\\nonumber
\text{and}\\
\label{sig'}|Ric^{Y}|^2_{h_+}&=|E|^2_{h_+}+\frac{R_{Y}^2}{4}
\end{align}
in the pursuit of our preliminary formula for the renormalized area of $Y$ which we denote by $A(Y).$
\begin{lemma}\label{sigma lemma}
\begin{align}\label{sym}
4\sigma_2(\overline{h}^{-1}\overline{P})&=4r^{-4}\sigma_2(h_+^{-1}P_+)\\\nonumber&+2\nabla_{\overline{h}}^{\hat{\alpha}}(r^{-3}|dr|^2_{\overline{h}}r_{\hat{\alpha}}-\frac{r_{\hat{\alpha}}\Delta_{\overline{h}}r}{r^2}+r^{-2}r_{\hat{\alpha}\hat{\beta}}r^{\hat{\beta}}+r^{-1}R^{\overline{Y}}_{\hat{\alpha} \hat{\beta}}r^{\hat{\beta}}-\frac{1}{2}r^{-1}R^{\overline{Y}}r_{\hat{\alpha}})
\end{align}
(The Greek index on a scalar function here corresponds to covariant differentiation on $(\overline{Y},\overline{h})$ and $\Delta_{\overline{h}}=\nabla^{\overline{h}}_{\hat{\alpha}}\nabla_{\overline{h}}^{\hat{\alpha}}$).
\end{lemma}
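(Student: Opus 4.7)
The plan is a direct calculation starting from the conformal transformation law for the Schouten tensor under $\overline{h} = r^2 h_+$. Taking $\phi = -\log r$ so that $h_+ = e^{2\phi}\overline{h}$, the standard formula specializes to the pointwise identity
\begin{equation*}
(P_+)_{\hat{\alpha}\hat{\beta}} = \overline{P}_{\hat{\alpha}\hat{\beta}} + \frac{r_{\hat{\alpha}\hat{\beta}}}{r} - \frac{|dr|^2_{\overline{h}}}{2r^2}\overline{h}_{\hat{\alpha}\hat{\beta}},
\end{equation*}
where all derivatives of $r$ are taken with respect to $\overline{h}$ and the $d\phi\otimes d\phi$ and $\nabla^2\phi$ terms cancel because $\phi$ is a logarithm.

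Using $h_+^{\hat{\alpha}\hat{\beta}} = r^2\overline{h}^{\hat{\alpha}\hat{\beta}}$ and $\dim Y = 4$, I would then expand the scalar invariants
\begin{align*}
J_{h_+} &= r^2 J_{\overline{h}} + r\Delta_{\overline{h}}r - 2|dr|^2_{\overline{h}},\\
r^{-4}|P_+|^2_{h_+} &= |\overline{P}|^2_{\overline{h}} + \tfrac{2}{r}\overline{P}^{\hat{\alpha}\hat{\beta}}r_{\hat{\alpha}\hat{\beta}} - \tfrac{|dr|^2_{\overline{h}}}{r^2}J_{\overline{h}} + \tfrac{1}{r^2}|\nabla^2_{\overline{h}}r|^2_{\overline{h}} - \tfrac{|dr|^2_{\overline{h}}}{r^3}\Delta_{\overline{h}}r + \tfrac{|dr|^4_{\overline{h}}}{r^4},
\end{align*}
substitute into $4\sigma_2(\cdot) = 2J^2 - 2|P|^2$, and subtract $4\sigma_2(\overline{h}^{-1}\overline{P})$. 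The $\overline{P}$-quadratic contributions cancel, leaving a residue
\begin{equation*}
E \,:=\, 4r^{-4}\sigma_2(h_+^{-1}P_+) - 4\sigma_2(\overline{h}^{-1}\overline{P})
\end{equation*}
which is polynomial in the scalars $r^{-k}$, $|dr|^2_{\overline{h}}$, $\Delta_{\overline{h}}r$, $|\nabla^2_{\overline{h}}r|^2_{\overline{h}}$, $J_{\overline{h}}$, and $\overline{P}^{\hat{\alpha}\hat{\beta}}r_{\hat{\alpha}\hat{\beta}}$.

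The remaining task is to verify that $-E$ equals $2\nabla^{\hat{\alpha}}_{\overline{h}}X_{\hat{\alpha}}$, where $X$ is the vector field parenthesized on the right of \eqref{sym}. I would differentiate each of the five summands of $X$ by the product rule and then apply the two standard identities
\begin{equation*}
\nabla^{\hat{\alpha}}_{\overline{h}}r_{\hat{\alpha}\hat{\beta}} = \nabla_{\hat{\beta}}\Delta_{\overline{h}}r + R^{\overline{Y}}_{\hat{\beta}\hat{\alpha}}r^{\hat{\alpha}},\qquad \nabla^{\hat{\alpha}}_{\overline{h}}R^{\overline{Y}}_{\hat{\alpha}\hat{\beta}} = \tfrac{1}{2}\nabla_{\hat{\beta}}R^{\overline{Y}},
\end{equation*}
the Ricci commutator for the Hessian of a scalar and the contracted second Bianchi identity, to reduce the divergence to the same monomials appearing in $E$. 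Substituting $J_{\overline{h}} = R^{\overline{Y}}/6$ and $\overline{P}^{\hat{\alpha}\hat{\beta}} = \tfrac{1}{2}(\mathrm{Ric}^{\overline{Y}\,\hat{\alpha}\hat{\beta}} - \tfrac{R^{\overline{Y}}}{6}\overline{h}^{\hat{\alpha}\hat{\beta}})$ and matching coefficients finishes the lemma.

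The main obstacle is purely organizational: roughly nine monomial types appear, and several require the identities above to be made to cancel or combine. The trickiest points are the coefficient of $r^{-1}R^{\overline{Y}}\Delta_{\overline{h}}r$, which balances only by virtue of the four-dimensional identity $J_{\overline{h}} = R^{\overline{Y}}/6$ feeding into both the $\overline{P}$-cross term and the scalar-curvature summand of $X$, and the pairs $r^{\hat{\alpha}}\nabla_{\hat{\alpha}}\Delta_{\overline{h}}r$ and $r^{\hat{\alpha}}\nabla_{\hat{\alpha}}R^{\overline{Y}}$, which must cancel between different summands of $X$.
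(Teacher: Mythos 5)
Your proposal is correct and follows essentially the same route as the paper: the same conformal transformation law for the Schouten tensor with $\varphi=-\log r$, the same expansion of $2(J^2-|P|^2)$ under $\overline{h}\to h_+$, and the same verification of the divergence term via the product rule, the Ricci commutation identity for the Hessian, the contracted Bianchi identity, and rewriting $\mathrm{Ric}^{\overline{Y}}$ in terms of $\overline{P}$. The intermediate identities you quote (the trace formula for $J_{h_+}$ and the expansion of $r^{-4}|P_+|^2_{h_+}$) match the paper's computation exactly.
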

\begin{proof}
\begin{equation}\nonumber
\sigma_2(h_+^{-1}P_+)=\frac{1}{2}\big([tr_{h_+}P_+]^2-|P_+|_{h_+}^2\big). 
\end{equation}
Let $\varphi=-\log r.$ Then the conformal transformation formula for $P_+$ gives us
\begin{equation}\nonumber
P^+_{\alpha\beta}=\overline{P}_{\alpha\beta}-\varphi_{\hat{\alpha}\hat{\beta}}+\varphi_{\hat{\alpha}}\varphi_{\hat{\beta}}-\frac{1}{2}|d\varphi|^2_{\overline{h}}\overline{h}_{\alpha\beta}
\end{equation}
\begin{equation}\nonumber
P^+_{\alpha\beta}=\overline{P}_{\alpha\beta}+\frac{r_{\hat{\alpha}\hat{\beta}}}{r}-\frac{1}{2r^2}|dr|^2_{\overline{h}}\overline{h}_{\alpha\beta}.
\end{equation}
Taking the trace of both sides gives
\begin{equation}\nonumber
    tr_{h_+} P^+=r^2tr_{\overline{h}}\overline{P}+r\Delta_{\overline{h}}r-2|dr|^2_{\overline{h}}.
\end{equation}
This allows us to compute the conformal change of $\sigma_2(h_+^{-1}P_+)$
\begin{align}\nonumber
    &4\sigma_2(h_+^{-1}P_+)=2\big( [tr_{h_+} P_+]^2-|P_+|^2_{h_+}\big)=\\\nonumber
    &=2\big(\big[r^2[tr_{\overline{h}} \overline{P}]+r\Delta_{\overline{h}}r-2|dr|_{\overline{h}}^2 \big]^2\\\nonumber
    &\hspace{4mm}-r^4\big[\overline{P}_{\alpha\beta}+\frac{r_{\alpha\beta}}{r}-\frac{1}{2r^2}|dr|^2_{\overline{h}}\overline{h}_{\alpha\beta} \big]\big[\overline{P}^{\alpha\beta}+\frac{r^{\alpha\beta}}{r}-\frac{1}{2r^2}|dr|^2_{\overline{h}}\overline{h}^{\alpha\beta} \big] \big)\\\nonumber
    &=2\big(\big[[r^4tr_{\overline{h}}\overline{P}]^2+2r^3tr_{\overline{h}}\overline{P}\Delta_{\overline{h}}r-4r^2tr_{\overline{h}}\overline{P}|dr|^2_{\overline{h}}+r^2(\Delta _{\overline{h}}r)^2-4r\Delta_{\overline{h}}r|dr|^2_{\overline{h}}+4|dr|^4_{\overline{h}} \big] \\\nonumber
    &\hspace{5mm}-r^4\big[|\overline{P}|^2+\frac{2r^{\alpha\beta}}{r}\overline{P}_{\alpha\beta}-\frac{tr_{\overline{h}}\overline{P}}{r^2}|dr|^2_{\overline{h}}+\frac{1}{r^2}|\nabla^2_{\overline{h}}r|^2_{\overline{h}}-\frac{\Delta_{\overline{h}}r}{r^3}|dr|^2_{\overline{h}}+\frac{1}{r^4}|dr|^4_{\overline{h}} \big]\big)\\\nonumber
    &=2r^4\big[[tr_{\overline{h}}\overline{P}]^2-|\overline{P}|^2_{\overline{h}} \big]+4r^3tr_{\overline{h}}\overline{P}\Delta_{\overline{h}}r-6r^2tr_{\overline{h}}\overline{P}|dr|^2_{\overline{h}}\\\nonumber
&\hspace{5mm}+2r^2(\Delta_{\overline{h}}r)^2-6r(\Delta_{\overline{h}}r)|dr|^2_{\overline{h}}+8|dr|^4_{\overline{h}}-4r^{\alpha\beta}\overline{P}_{\alpha\beta}r^3-2r^2|\nabla^2_{\overline{h}}r|^2_{\overline{h}}+6|dr|^4_{\overline{h}}\\\nonumber
&=4r^4\sigma_2(\overline{h}^{-1}\overline{P})+6|dr|^4_{\overline{h}}-6r(\Delta_{\overline{h}}r)|dr|^2_{\overline{h}}+2r^2[(\Delta_{\overline{h}}r)^2-|\nabla^2_{\overline{h}}r|^2_{\overline{h}}-3tr_{\overline{h}}\overline{P}|dr|^2_{\overline{h}}]\\\nonumber
    &\hspace{5mm}+4r^3\big[tr_{\overline{h}}\overline{P}\Delta_{\overline{h}}r-\overline{P}_{\alpha\beta}r^{\alpha\beta} \big]. \\\nonumber
\end{align}
Therefore \begin{align}\label{sym''}
4r^{-4}\sigma_2(h_+^{-1}P_+)=&\\\nonumber
&4\sigma_2(\overline{h}^{-1}\overline{P})+6r^{-4}|dr|^4_{\overline{h}}-6r^{-3}(\Delta_{\overline{h}}r)|dr|^2_{\overline{h}}\\\nonumber&+2r^{-2}[(\Delta_{\overline{h}}r)^2-|\nabla^2_{\overline{h}}r|^2_{\overline{h}}-3tr_{\overline{h}}\overline{P}|dr|^2_{\overline{h}}]\\\nonumber &+4r^{-1}\big[tr_{\overline{h}}\overline{P}\Delta_{\overline{h}}r-\overline{P}_{\alpha\beta}r^{\alpha\beta} \big].
\end{align}
Expanding each term on the right-hand side of \eqref{sym} gives 
\begin{align}
\nabla^{\hat{\alpha}}_{\overline{h}}(r^{-3}|dr|^2_{\overline{h}} r_{\hat{\alpha}})&=-3r^{-4}|dr|^4_{\overline{h}}+r^{-3}(r^{\hat{\beta}\hat{\alpha}}r_{\hat{\beta}}r_{\hat{\alpha}}+r^{\hat{\beta}}r_{\hat{\beta}}{}^{\hat{\alpha}}r_{\hat{\alpha}}),\\
\nabla^{\hat{\alpha}}_{\overline{h}}(-r^{-2}r_{\hat{\alpha}}\Delta^{\overline{h}} r)&=-r^{-2}(\Delta^{\overline{h}} r)^2-r^{-2}r_{\hat{\alpha}}r^{\hat{\beta}}{}_{\hat{\beta}}{}^{\hat{\alpha}}+2r^{-3}|dr|_{\overline{h}}^2\Delta^{\overline{h}} r,\\\label{eq}
\nabla^{\hat{\alpha}}_{\overline{h}}(r^{-2}r_{\hat{\alpha}\hat{\beta}}r^{\hat{\beta}})&=-2r^{-3}r^{\hat{\alpha}}r_{\hat{\alpha}\hat{\beta}}r^{\hat{\beta}}+r^{-2}r_{\hat{\alpha}\hat{\beta}}{}^{\hat{\alpha}}r^{\hat{\beta}}+r^{-2}r_{\hat{\alpha}\hat{\beta}}r^{\hat{\beta}\hat{\alpha}},\\\label{eq2}
\nabla^{\hat{\alpha}}_{\overline{h}}(r^{-1}R_{\hat{\alpha}\hat{\beta}}r^{\hat{\beta}})&=-r^{-2}r^{\hat{\alpha}}r^{\hat{\beta}}R_{\hat{\alpha}\hat{\beta}}^{\overline{h}}+r^{-1}\nabla^{\hat{\alpha}}_{\overline{h}}R_{\hat{\alpha}\hat{\beta}}^{\overline{h}}r^{\hat{\beta}}+r^{-1}R_{\hat{\alpha}\hat{\beta}}^{\overline{Y}}r^{\hat{\beta}\hat{\alpha}},\\
\nabla^{\hat{\alpha}}_{\overline{h}}(-\frac{1}{2}r^{-1}R^{\overline{Y}}r_{\hat{\alpha}})&=\frac{1}{2} r^{-2}R^{\overline{Y}}|dr|_{\overline{h}}^2-\frac{1}{2} r^{-1}R_{\overline{Y}}{}^{\hat{\alpha}}r_{\hat{\alpha}}-\frac{1}{2} r^{-1}R^{\overline{Y}}\Delta^{\overline{h}} r.
\end{align}
Using the Ricci identity to commute the outer covariant derivatives of the middle factor of the $r^{-2}r_{\hat{\alpha}\hat{\beta}}{}^{\hat{\alpha}}r^{\hat{\beta}}$ term in \eqref{eq} followed by the contracted Bianchi identity on the second term of \eqref{eq2}, then writing the Ricci factor in the last term of \eqref{eq2} in terms of the Schouten tensor and comparing to \eqref{sym''} it becomes clear that \eqref{sym} is true.
\end{proof}
Plugging \eqref{sig} and \eqref{sig'} into \eqref{sym} gives 
\begin{align}\label{sym'}
4\sigma_2(\overline{h}^{-1}\overline{P})=&r^{-4}(-\frac{|E|_{h_+}^2}{2}+\frac{R_{\overline{Y}}^2}{24})\\\nonumber &+2\nabla_{\overline{h}}^{\hat{\alpha}}(r^{-3}|dr|^2_{\overline{h}}r_{\hat{\alpha}}-\frac{r_{\hat{\alpha}}\Delta_{\overline{h}}r}{r^2}+r^{-2}r_{\hat{\alpha}\hat{\beta}}r^{\hat{\beta}}+r^{-1}R^{\overline{Y}}_{\hat{\alpha} \hat{\beta}}r^{\hat{\beta}}-\frac{1}{2}r^{-1}R^{\overline{Y}}r_{\hat{\alpha}}) \end{align}
and using the above equality to re-write $4\sigma_2(\overline{h}^{-1}P)$ in  \eqref{cgb} we get
\begin{align}\nonumber\label{cpct'''}
8\pi^2\chi(Y_{\epsilon})&=\int_{Y_{\epsilon}}\frac{|\overline{W}|^2_{\overline{h}}}{4}dA_{\overline{h}}+\int_{Y_{\epsilon}}r^{-4}(\frac{R_{Y}^2}{24}-\frac{|E|_{h_+}^2}{2})dA_{\overline{h}}\\\nonumber&+2\int_{Y_{\epsilon}}\nabla_{\overline{h}}^{\hat{\alpha}}(\hat{r}^{-3}|d\hat{r}|^2_{\overline{h}}\hat{r}_{\hat{\alpha}}-\frac{\hat{r}_{\hat{\alpha}}\Delta_{\overline{h}}\hat{r}}{\hat{r}^2}+\hat{r}^{-2}\hat{r}_{\hat{\alpha}\hat{\beta}}\hat{r}^{\hat{\beta}}+\hat{r}^{-1}R^{\overline{Y}}_{\hat{\alpha} \hat{\beta}}\hat{r}^{\hat{\beta}}-\frac{1}{2}\hat{r}^{-1}R^{\overline{Y}}\hat{r}_{\hat{\alpha}}) dA_{\overline{h}}\\\nonumber &\hspace{5mm}+\oint_{\Sigma_{\epsilon}}\overline{S}_rds_{\overline{k}_{\epsilon}}.
\end{align}
Using the conformal invariance of the Weyl term and using $\hat{r}^{-4}dA_{\overline{h}}=dA_+$ yields:
\begin{align}\nonumber
8\pi^2\chi(Y_{\epsilon})&=\int_{Y_{\epsilon}}\frac{|W|^2_{h_+}}{4}-\frac{|E|^2_{h_+}}{2}+\frac{R^2_{Y}}{24}dA_{h_+}\\\nonumber
&+\int_{Y_{\epsilon}} 2\overline{\nabla}^{\hat{\alpha}}(r^{-3}|dr|^2_{\overline{h}}r_{\hat{\alpha}} -\frac{r_{\hat{\alpha}}\Delta_{\overline{h}}r}{r^2}+r^{-2}r_{\hat{\alpha}\hat{\beta}}r^{\hat{\beta}} +r^{-1}R^{\overline{Y}}_{\hat{\alpha} \hat{\beta}}r^{\hat{\beta}}-\frac{1}{2}r^{-1}R^{\overline{Y}}r_{\hat{\alpha}}) dA_{\overline{h}}\\\nonumber
&+\oint_{\Sigma_{\epsilon}}\overline{S}_{r}ds_{\overline{k}_{\epsilon}}.\end{align}
Using integration by parts on the second integral and the fact that \begin{equation}\overline{\nu}_{r}=\frac{\nabla_{\overline{h}}r}{|\nabla_{\overline{h}}r|}=\frac{\nabla_{\overline{h}}r}{\sqrt{\overline{h}^{r r}}}=\partial_{\hat{r}}+O^{\hat{\alpha}}(r^5)\partial_{\hat{\alpha}}\end{equation}   yields
\begin{align*}
8\pi^2\chi(Y_{\epsilon})&=   \int_{Y_{\epsilon}}\frac{|W|^2_{h_+}}{4}-\frac{|E|^2_{h_+}}{2}+\frac{R^2_Y}{24}dA_{h_+}\\\nonumber &-\int_{\Sigma_{\epsilon}}  2(r^{-3}|dr|^2_{\overline{h}}r_{\hat{r}}  -\frac{r_{\hat{r}}\Delta_{\overline{h}}r}{r^2}   +r^{-2}r_{\hat{\alpha} \hat{r}}r^{\hat{\alpha}}+r^{-1}R^{\overline{Y}}_{\hat{r}\hat{\beta}}r^{\hat{\beta}}-\frac{1}{2}r^{-1}R^{\overline{Y}}r_{\hat{r}})ds_{\overline{k}(\epsilon)}\\\nonumber &+O(\epsilon^2)+\oint_{\Sigma{\epsilon}}\overline{S}_{r}ds_{\overline{k}_{\epsilon}}.
\end{align*}
Defining $$\mathcal{B}_{r}:=\overline{S}_{r}-2 (r^{-3}|dr|^2_{\overline{h}} 
-\frac{\Delta_{\overline{h}}r}{r^2}+r^{-2}r_{\hat{r}\hat{\beta}}r^{\hat{\beta}}+r^{-1}R^{\overline{Y}}_{\hat{r}\hat{\beta}}r^{\hat{\beta}}-\frac{1}{2}r^{-1}R^{\overline{Y}})$$ allows us to write
\begin{align}
8\pi^2\chi(Y_{\epsilon})&=\int_{Y_{\epsilon}}\frac{|W|^2_{h_+}}{4}-\frac{|E|^2_{h_+}}{2}+\frac{R^2_Y}{24}dA_{h_+} +\oint_{\Sigma_{\epsilon}} \mathcal{B}_r ds_{\overline{k}_{\epsilon}}+O(\epsilon^2). \end{align}
Now we plug \eqref{SC} in for $R_Y^2$ and get 
\begin{align}\label{GB}8\pi^2\chi(Y_{\epsilon})
&=\int_{Y_{\epsilon}}\frac{|W|^2_{h_+}}{4}dA_{h_+}-\int_{Y_{\epsilon}}\frac{|E|^2_{h_+}}{2}dA_++6\int_{Y_{\epsilon}}dA_+\\\nonumber &\hspace{5mm}+\int_{Y_{\epsilon}}\frac{|B|_{h_+}^4}{24}+|B|_{h_+}^2dA_+\\\nonumber &\hspace{5mm}+\oint_{\Sigma_{\epsilon}}\mathcal{B}_{r} ds_{\overline{k}_{\epsilon}}+O(\epsilon^2).\end{align}$\underset{\epsilon\to 0}{\lim}\int_{Y_{\epsilon}}|W|^2_{h_+}dA_+$ clearly converges since $|W|^2_{h_+}$ is a conformal invariant of weight $-4.$ $\chi(Y_{\epsilon})=\chi(Y)$ for $\epsilon$ small enough since we can use the flow of $\nabla_{\overline{h}}r$ to deformation retract $Y$ onto $Y_{\epsilon}.$ Next we need a technical lemma to allow us to identify the constant term in the asymptotic expansion of the right-hand side of \eqref{GB}. We will get that the trace-free Ricci term and the $|B|^4$ term converge and we will obtain some formulas that will allow us to determine the finite part of the boundary term.
\begin{lemma}\label{4.2}
\begin{align}\label{Order}
&|dr|^2_{\overline{h}}=1+O(r^5),\\
&\overline{S}_r=O(r),\\\label{R}
&\partial_{\hat{r}} R^{\overline{Y}}(\overline{\nu}_r,\overline{\nu}_r)|_{r=0}=0,\\\label{R'}
&\partial_{\hat{r}} R^{\overline{Y}}|_{r=0}=0,\\
&|\int_{Y}|E|^2_{h_+} dA_+|<\infty,\\
&|\int_{Y}|B|^4_{h_+} dA_+|<\infty.
\end{align}
\end{lemma}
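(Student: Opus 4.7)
The plan is to exploit throughout that the special${}^{2}$ defining function $r$ is chosen so $\Sigma = \partial Y$ is minimal in $\partial X$ with respect to $g_\infty$, forcing $\eta = 0$ in \eqref{z} and hence $z(x^a,r) = z^{(4)}(x^a)\,r^4/4! + O(r^5)$, so $z_r = O(r^3)$ and $z_a = O(r^4)$. This single observation is what drives all six assertions.

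For \eqref{Order}: our ambient coordinates satisfy $\overline{g}_{44}|_M = 1$ and $\overline{g}_{4a}|_M = 0$ (since $x^4$ is $g_0$-signed distance from $\Sigma$), and the Fefferman--Graham expansion of $\overline{g}$ has no $r^1$ or $r^3$ term through order three. Substituting these with the bounds on $z$ into \eqref{h} yields $\overline{h}_{rr} = 1 + O(r^6)$ and $\overline{h}_{ar} = O(r^5)$. A Schur-complement inversion then gives $|dr|^2_{\overline{h}} = \overline{h}^{rr} = 1 + O(r^5)$. For \eqref{S}, the induced metric $\overline{h}|_{\Sigma_r}$ agrees with $\overline{g}|_{r\text{-slice}}$ up to $O(r^6)$, and the Fefferman--Graham structure forces its $r$-derivative to be $O(r)$, so the second fundamental form $\overline{L}_r$ and mean curvature $\overline{H}_r$ of $\Sigma_r \subset Y$ are both $O(r)$. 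Inspecting \eqref{S} term-by-term, every summand carries a factor of $\overline{L}_r$ or $\overline{H}_r$, and hence $\overline{S}_r = O(r)$.

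For \eqref{R} and \eqref{R'}: the even-parity structure (no $r^1$ or $r^3$ term through order three) of the Fefferman--Graham expansion of $\overline{g}$ transfers to $\overline{h}$ via \eqref{h}, since the $z$-corrections enter only at $O(r^6)$. Because $R^{\overline{Y}}$ and $\mathrm{Ric}^{\overline{Y}}$ are polynomial expressions in the components of $\overline{h}^{\pm 1}$ and their first two $r$-derivatives, their $r$-Taylor expansions inherit the even parity and in particular have no $r^1$ coefficient. Combined with $\overline{\nu}_r = \partial_{\hat{r}} + O(r^5)\partial_{\hat{\alpha}}$, this gives $\partial_{\hat{r}} R^{\overline{Y}}|_{r=0} = 0$ and $\partial_{\hat{r}}\mathrm{Ric}^{\overline{Y}}(\overline{\nu}_r,\overline{\nu}_r)|_{r=0} = 0$.

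The main obstacle is the two $L^1$ statements. Since $Y$ is minimal in $(X, g_+)$, we have $B_{h_+} = \mathring{B}_{h_+}$, and the conformal transformation law $\mathring{B}_{h_+} = r^{-1}\mathring{B}_{\overline{h}}$ combined with the scaling of the $h_+$-norm in $\overline{h}^{-1}$ yields $|B|^2_{h_+} = r^2|\mathring{B}_{\overline{h}}|^2_{\overline{h}}$. Hence $|B|^4_{h_+}\,dA_{h_+} = |\mathring{B}_{\overline{h}}|^4_{\overline{h}}\,dA_{\overline{h}}$, which integrates to a finite value on compact $\overline{Y}$ because $\mathring{B}_{\overline{h}}$ is smooth there. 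For $|E|^2_{h_+}$ the conformal transformation of Ricci produces an ostensibly singular term $\tfrac{2}{r}[\nabla^{\overline{h}}_\alpha\nabla^{\overline{h}}_\beta r - \tfrac{1}{4}(\Delta_{\overline{h}} r)\overline{h}_{\alpha\beta}]$, but using the even-parity structure from the previous step one verifies that the trace-free Hessian of $r$ is itself $O(r)$, so $E_{h_+}$ extends boundedly to $\overline{Y}$ and $|E|^2_{h_+}\,dA_{h_+} = |E_{h_+}|^2_{\overline{h}}\,dA_{\overline{h}}$ is integrable on compact $\overline{Y}$. Establishing this cancellation in the trace-free Hessian of the special${}^{2}$ defining function is the crux of the calculation.
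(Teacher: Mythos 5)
Your proposal is correct, and for most of the six assertions it runs along the same lines as the paper: the choice of special${}^{2}$ defining function kills $\eta$, so $z=O(r^4)$ and $\overline{h}$ agrees with the product-type compactified metric to high order; the vanishing of $\overline{S}_r$ at $r=0$ comes from every summand of \eqref{S} carrying a factor of $\overline{L}_r$ or $\overline{H}_r$, each $O(r)$; and the finiteness of $\int_Y|B|^4_{h_+}dA_{h_+}$ comes from $|\mathring{B}|^4_{h_+}$ being a pointwise conformal invariant of weight $-4$. Where you genuinely diverge is in two places. For \eqref{R} and \eqref{R'} you invoke the parity of the Fefferman--Graham expansion wholesale (no $r^1$ or $r^3$ terms, hence scalar quantities and components with an even number of $r$-indices are even in $r$), whereas the paper grinds out the Christoffel symbols and the relevant components $R^{\overline{Y}}_{cadb}$, $R^{\overline{Y}}_{rarb}$, $R^{\overline{Y}}_{rr}$ explicitly from the fact that $\overline{g}'$ and $\overline{g}'''$ vanish at $r=0$; your argument is cleaner but should be stated for the specific even-index quantities $R^{\overline{Y}}$ and $R^{\overline{Y}}_{rr}$, since odd-index components such as $R^{\overline{Y}}_{rabc}$ are odd rather than even. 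For $\int_Y|E|^2_{h_+}dA_{h_+}$ you transform the trace-free Schouten/Ricci tensor directly and reduce everything to showing $\mathring{\nabla^2_{\overline{h}}r}=O(r)$ (which does hold, since $\partial_{\hat{r}}\overline{h}_{ab}|_{r=0}=0$ and $\overline{\Gamma}^r_{rr},\overline{\Gamma}^r_{ar}$ are higher order), correctly using that the $d\varphi\otimes d\varphi$ terms cancel in the Schouten transformation so that no genuine $r^{-2}$ singularity survives. The paper instead contracts the Gauss equation to write $E_{\alpha\beta}=-B^2_{\alpha\beta}+\tfrac{1}{4}|B|^2h_{\alpha\beta}-W^X_{\hat\alpha N\hat\beta N}$ and concludes that $|E|^2_{h_+}$ is an exact pointwise conformal invariant of weight $-4$. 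Your route is more self-contained and uses only the boundary asymptotics already established; the paper's route buys the stronger structural fact of pointwise conformal invariance (not merely boundedness of $|E|_{\overline{h}}$), which is in keeping with the conformal flavor of the final Gauss--Bonnet formula. Both arguments are sound.
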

\begin{proof}
Now we are using the same notational setup and coordinates described in the preliminary section. Recall \eqref{z}
\begin{equation*}
  z(x^a,r)=\frac{\eta}{6}r^2+z^{(4)}(x^a)\frac{r^4}{4!}+O(r^5),
\end{equation*}
where $\eta$ is the mean curvature of $\Sigma$ in $M.$ 
Our choice of special defining function gives $\eta\equiv 0$ and therefore
\begin{equation*}
z=O(r^4).
\end{equation*}
It follows that \begin{align}
 \overline{h}_{\alpha \beta}=\overline{g}_{\alpha\beta}+O_{\alpha\beta}(r^5)	
 \end{align}
and
\begin{align}\label{h inverse}
\overline{h}^{\alpha\beta}=\overline{g}^{\alpha\beta}+O^{\alpha\beta}(r^5).	
\end{align}
Now,
\begin{equation}\label{dr}
|dr|^2_{\overline{h}}=\overline{h}^{\alpha\beta}r_{\hat{\alpha}}r_{\hat{\beta}}=\overline{h}^{rr}=1+O(r^5).
\end{equation}
So we get the first part of our lemma (in fact $\overline{h}^{rr}=1+O(r^6)$ but the line above is all we need). 
Recall \eqref{S}:  
\begin{align*}\overline{S}_{r}=R^{\overline{Y}}\overline{H}_{\hat{r}}-2Ric^{\overline{Y}}(\overline{\nu}_{r},\overline{\nu}_{r})\overline{H}_{r}-2R_{\overline{Y}}{}^d{}_{adb}\overline{L}_{r}^{ab}+\frac{2}{3}\overline{H}_{r}^3-2\overline{H}_{r}|\overline{L}_{r}|^2_{\overline{h}}+\frac{4}{3}tr(\overline{L}_{r}^3).\end{align*}
All of the terms involving $\overline{H}^{r}$ and $\overline{L}^{r}$ vanish on the boundary since the second fundamental form of $\Sigma$ within $\overline{Y}$ vanishes i.e. $\Sigma$ is totally geodesic in $(\overline{Y},\overline{h})$, to see this consider the following:
Recall that $\overline{\nu}_{\epsilon}$ is the inward pointing unit-normal to $\partial Y_{\epsilon}$ within $Y_{\epsilon}.$

\begin{align}\overline{\nu}_{r}=\frac{\nabla_{\overline{h}}r}{|\nabla_{\overline{h}}r|}=\frac{\nabla_{\overline{h}}r}{\sqrt{\overline{h}^{r r}}}&=\partial_{\hat{r}}+O^{\hat{\alpha}}(r^5)\partial_{\hat{\alpha}}\\\nonumber
&=\partial_{r}+O^{i}(r^3)\partial_i.
\end{align}
Therefore
\begin{align*}\nonumber
\overline{L}_{ab}^{r}&= \overline{h}(\nabla_{\phi_{*}(\partial_a)}^{\overline{Y}}\phi_{*}(\partial_b), \overline{\nu}_{r})\\\nonumber
&=\overline{\Gamma}_{ab}^{\alpha}\overline{h}_{\alpha r}+O(r^3)\\
&=\overline{\Gamma}_{ab}^{c}\overline{h}_{c r}+\overline{\Gamma}_{ab}^r\overline{h}_{rr}+O(r^3)\\
&=\overline{\Gamma}_{ab}^r\overline{h}_{rr}+O(r)
\end{align*}
\begin{align}\label{chris}
\overline{\Gamma}_{ab}^r=\frac{\overline{h}^{rd}}{2}\bigg\{\partial_{\hat{a}} \overline{h}_{bd}+\partial_{\hat{b}} \overline{h}_{ad}-\partial_{\hat{d}} \overline{h}_{ab} \bigg\}+\frac{\overline{h}^{rr}}{2}\bigg\{\partial_{\hat{a}} \overline{h}_{br}+\partial_{\hat{b}} \overline{h}_{ar}-\partial_{\hat{r}} \overline{h}_{ab} \bigg\}=O(r)
\end{align}
hence  
\begin{equation}\label{minimal L}
\overline{L}_{ab}^r=O(r).
\end{equation}
This implies that $\overline{S}_r=O(r).$ 
Now consider the following

\begin{align}
    R^{\overline{Y}}&=\overline{h}^{\alpha\beta}R^{\overline{Y}}_{\alpha\beta}\\\nonumber
    &=\overline{h}^{ab}R^{\overline{Y}}_{ab}+2\overline{h}^{ar}R^{\overline{Y}}_{ar}+\overline{h}^{rr}R^{\overline{Y}}_{rr}\\\nonumber
    &=\overline{h}^{ab}R^{\overline{Y}}_{ab}+R^{\overline{Y}}_{rr}+O(r^2).
\end{align}
Now 
\begin{align}
    \hspace{-4mm}R^{\overline{Y}}_{ab}&=\overline{h}^{\alpha\beta}R^{\overline{Y}}_{\alpha a \beta b}\\\nonumber
    &=\overline{h}^{cd}R^{\overline{Y}}_{cadb}+R^{\overline{Y}}_{rarb}+O(r^2)
\end{align}
Now
\begin{align}
    \hspace{18mm}R^{\overline{Y}}_{cadb}&=R^{\overline{Y}}_{cad}{}^{\alpha}\overline{h}_{\alpha b}\\\nonumber
    &=R^{\overline{Y}}_{cad}{}^{e}\overline{h}_{e b}+R^{\overline{Y}}_{cad}{}^{r}\overline{h}_{r b}\\\nonumber
    &=R^{\overline{Y}}_{cad}{}^{e}\overline{h}_{e b}+O(r^2)\\\nonumber
    &=\big[ \partial_{\hat{a}} \overline{\Gamma}^{e}_{cd}-\partial_{\hat{c}}\overline{\Gamma}^{e}_{ad}+\overline{\Gamma}_{cd}^{\beta}\overline{\Gamma}_{\beta a}^{e}-\overline{\Gamma}_{ad}^{\beta}\overline{\Gamma}_{\beta c}^{e} \big]\overline{h}_{eb}+O(r^2).
\end{align}
Now we compute some Christoffel symbols:
\begin{align}
    \overline{\Gamma}^e_{cd}&=\frac{\overline{h}^{ea}}{2}\bigg\{\partial_{\hat{c}} \overline{h}_{ad}+\partial_{\hat{d}} \overline{h}_{ac}-\partial_{\hat{a}} \overline{h}_{cd} \bigg\}+O(r^2)\\\nonumber
    &=C+O(r^2)\hspace{2mm}\text{($C$ here just means a constant wrt $r;$ i.e. a function on $\Sigma$)} 
\end{align}
\begin{align}
\overline{\Gamma}_{cd}^{\beta}\overline{\Gamma}_{\beta a}^{e}&=\overline{\Gamma}_{cd}^{b}\overline{\Gamma}_{b a}^{e}+\overline{\Gamma}_{cd}^{r}\overline{\Gamma}_{r a}^{e}\\\nonumber
&=\overline{\Gamma}_{cd}^{r}\overline{\Gamma}_{r a}^{e}+C+O(\hat{r}^2)\\\nonumber
&=\frac{1}{2}\bigg\{\partial_{\hat{c}} \overline{h}_{rd}+\partial_{\hat{d}} \overline{h}_{cr}-\partial_{\hat{r}} \overline{h}_{cd} \bigg\}\frac{\overline{h}^{ec}}{2}\bigg\{\partial_{\hat{r}} \overline{h}_{ac}+\partial_{\hat{a}} \overline{h}_{rc}-\partial_{\hat{c}} \overline{h}_{ra} \bigg\}+C+O(r^2)\\\nonumber
&=\frac{1}{4}\big\{ -\partial_{\hat{r}} \overline{h}_{cd}\big\}\big\{ \partial_{\hat{r}} \overline{h}_{ac} \big\}+C+O(r^2)\\\nonumber
&=C+O(r^2).
\end{align}
It follows that 
\begin{equation}
\hspace{-58mm}R^{\overline{Y}}_{cadb}=C+O(r^2)
\end{equation}
\begin{align}
R^{\overline{Y}}_{rarb}&=\big[ \partial_{\hat{a}} \overline{\Gamma}^{e}_{rr}-\partial_{\hat{r}}\overline{\Gamma}^{e}_{ar}+\overline{\Gamma}_{rr}^{\beta}\overline{\Gamma}_{\beta a}^{e}-\overline{\Gamma}_{ar}^{\beta}\overline{\Gamma}_{\beta r}^{e} \big]\overline{h}_{eb}+O(r^2).
\end{align}
Now we compute some Christoffel symbols
\begin{align*}
    \overline{\Gamma}_{rr}^e&=\frac{\overline{h}^{ed}}{2}\big\{\partial_{\hat{r}}\overline{h}_{dr}+\partial_{\hat{r}}\overline{h}_{dr}-\partial_{\hat{d}}\overline{h}_{rr} \big\}+O(r^2)\\
    &=O(r^2).\\
     \partial_{\hat{r}}\overline{\Gamma}_{ar}^e&=\partial_{\hat{r}}\big[\frac{\overline{h}^{ed}}{2}\big\{\partial_{\hat{a}}\overline{h}_{dr}+\partial_{\hat{r}}\overline{h}_{da}-\partial_{\hat{d}}\overline{h}_{rr} \big\}+O(r^5)\big] \\
     &=\partial_{\hat{r}}\big[\big\{\frac{\overline{h}^{ed}\partial_{\hat{r}}\overline{h}_{ad}}{2}\}+O(r^5)\big]\\
     &=\overline{g}^{ed}\frac{\overline{g}''_{ad}}{2}\bigg|_{r=0}+O(r^2) \hspace{2mm}{\text{(this follows since $\overline{g}'$ and $\overline{g}'''$ vanish for $r=0$).}}\\
     \overline{\Gamma}_{rr}^{r}&=\frac{\overline{h}^{rr}}{2}\big\{\partial_{\hat{r}}\overline{h}_{rr}+\partial_{\hat{r}}\overline{h}_{rr}-\partial_{\hat{r}}\overline{h}_{rr} \big\}+O(r^5)\\
     &=O(r^4)\\
     \overline{\Gamma}_{ar}^e&=\frac{\overline{h}^{ed}}{2}\big\{\partial_{\hat{a}}\overline{h}_{dr}+\partial_{r}\overline{h}_{da}-\partial_{\hat{d}}\overline{h}_{rr} \big\}+O(r^5) \\
     &=O(r)
\end{align*}
It follows that $R^{\overline{Y}}_{rarb}$ has vanishing first derivative with respect to $r$ at the boundary. It then follows for $R^{\overline{Y}}_{ab}$ and, once we show it for $R^{\overline{Y}}_{rr},$ it will also follow for $R^{\overline{Y}}.$
So consider 
\begin{align}
    R^{\overline{Y}}_{rr}&=\overline{h}^{\alpha\beta}R^{\overline{Y}}_{\alpha r \beta r}\\\nonumber
    &=\overline{h}^{ab}R^{\overline{Y}}_{a r b r}+R^{\overline{Y}}_{rrrr}+O(r^2)\\\nonumber
    &=\overline{h}^{ab}R^{\overline{Y}}_{a r b r}+O(r^2)\\\nonumber
    &=O(r^2).
\end{align}
So we have the first four parts of our lemma. Now consider the following 
\begin{align}
    E_{\alpha\beta}&=R^Y_{\alpha\beta}-\frac{R^Y}{4}h_{\alpha\beta}\\\nonumber
    &=R^Y_{\alpha\beta}-\frac{-12-|B|^2}{4}h_{\alpha\beta}\\\nonumber
    &=R^Y_{\alpha\beta}+3h_{\alpha\beta}+\frac{|B|^2}{4}h_{\alpha\beta}
\end{align}
Contracting the Gauss equation on one pair of indices gives 
\begin{equation}
    R^{Y}_{\alpha\beta}+B^2_{\alpha\beta}=-3g_{\hat{\alpha}\hat{\beta}}-W^X_{\hat{\alpha}N\hat{\beta}N}.
\end{equation}
Therefore
\begin{align}
 E_{\alpha\beta}&=-B^2_{\alpha\beta}+\frac{|B|^2}{4}h_{\alpha\beta}-W^X_{\hat{\alpha}N\hat{\beta}N}
 \end{align}
 which implies 
 \begin{equation}
 |E|^2=|B^2|^2-\frac{|B|^4}{4}+2(B^2)^{\alpha\beta}W^X_{\hat{\alpha}N\hat{\beta}N}+W^X_{\hat{\alpha}N\hat{\beta}N}W_X^{\hat{\alpha}N\hat{\beta}N}.
 \end{equation}
 Now since Y is minimal, we have $B=\mathring{B}$ and since
 \begin{align}
  \mathring{B}_{\alpha\beta}&=\frac{\mathring{\overline{B}}_{\alpha\beta}}{r}\\
  B^2_{\alpha\beta}&=\mathring{B}_{\alpha\gamma}\mathring{B}_{\delta\beta}h^{\gamma\delta}=\mathring{\overline{B}}_{\alpha\gamma}\mathring{\overline{B}}_{\delta\beta}\overline{h}^{\gamma\delta}=\mathring{\overline{B}}^2_{\alpha\beta}\\
  (\mathring{B}^2)^{\alpha}_{\beta}&=r^2 (\mathring{\overline{B}}^2)^{\alpha}_{\beta}
  \end{align}
 we get that $|\mathring{B}|^4$ and $|\mathring{B}^2|^2$ are pointwise conformal invariants. In particular this gives us that $|B|^2\in L^2(Y).$ Also it is a conformal property of the Weyl tensor that \begin{align}
   W^X_{\hat{\alpha}N}{}^{\hat{\beta}}{}_{N}&=W^{\overline{X}}_{\hat{\alpha}N}{}^{\hat{\beta}}{}_{N}
 \end{align}
 Now since the index $N$ corresponds to a $g_+$ unit-normal to $Y$ and $\overline{N}$ correspond to a $\overline{g}$ unit normal to $Y$ we get 
 \begin{equation}
 W^{\overline{X}}_{\hat{\alpha}N}{}^{\hat{\beta}}{}_{N}=r^2 W^{\overline{X}}_{\hat{\alpha}\overline{N}}{}^{\hat{\beta}}{}_{\overline{N}}.
 \end{equation}
 It now follows that \begin{equation}(B^2)^{\alpha\beta}W^X_{\hat{\alpha}N\hat{\beta}N}\end{equation} and \begin{equation}W^X_{\hat{\alpha}N\hat{\beta}N}W_X^{\hat{\alpha}N\hat{\beta}N}\end{equation} are pointwise conformal invariants of weight $-4$. It then follows that $|E|^2$ is a pointwise conformal invariant and therefore $|E|\in L^2(Y).$ This finishes the proof of the lemma. 
 \end{proof}

Next we show that the boundary integral contributes nothing to the renormalized area.
\begin{claim}\label{4.3}
\begin{equation}
\oint_{\Sigma_{\epsilon}}\mathcal{B}_{r}ds_{\overline{k}_{\epsilon}}=\text{(negative powers of $\epsilon$)}+\text{(positive powers of $\epsilon$)}
\end{equation}
i.e. the above expansion has no finite part as $\epsilon \to 0$.
\end{claim}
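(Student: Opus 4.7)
The plan is to show that on the slice $\Sigma_\epsilon = \{r=\epsilon\}\subset Y$ the integrand $\mathcal{B}_r\,ds_{\overline{k}_\epsilon}$ admits an asymptotic expansion in $\epsilon$ containing only odd powers, which forces the $\epsilon^0$ coefficient to vanish. The core mechanism is a parity argument built on the special${}^2$ choice of $r$ and \cref{4.2}.

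First I would handle the $\overline{S}_r$ summand of $\mathcal{B}_r$ separately: by \cref{4.2} it is $O(r)$, so its contribution to $\oint_{\Sigma_\epsilon}$ is $O(\epsilon)$, a positive power that does not enter the finite part. The remaining task concerns the five singular summands $-2r^{-3}|dr|^2_{\overline{h}}$, $2r^{-2}\Delta_{\overline{h}} r$, $-2r^{-2}r_{,\hat{r}\hat{\beta}}r^{\hat{\beta}}$, $-2r^{-1}R^{\overline{Y}}_{\hat{r}\hat{\beta}}r^{\hat{\beta}}$, and $r^{-1}R^{\overline{Y}}$.

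Next I would use the special${}^2$ condition, which via \eqref{z} forces $z = O(r^4)$ and hence, via the formulas in \eqref{h}, yields $\overline{h}_{\alpha\beta} = \overline{g}_{\alpha\beta} + O(r^5)$, so the induced area form $ds_{\overline{k}_\epsilon}$ inherits the even-in-$r$ parity of the Fefferman-Graham expansion of $\overline{g}$ up to the relevant order. With this in hand, I would compute the parity of each remaining summand. The term $-2r^{-3}|dr|^2_{\overline{h}}$ expands in odd powers of $r$ since $|dr|^2_{\overline{h}} = \overline{h}^{rr} = 1 + O(r^6)$ is even. By \cref{4.2}, $R^{\overline{Y}}$ and $R^{\overline{Y}}_{\hat r\hat r}$ have vanishing $r$-derivative at $r=0$, and combined with the even structure of $\overline{h}$ they are even in $r$, so $r^{-1}R^{\overline{Y}}$ and $r^{-1}R^{\overline{Y}}_{\hat r\hat\beta}r^{\hat\beta}$ are odd. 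The covariant Hessian identities $\Delta_{\overline{h}}r = -\overline{h}^{\alpha\beta}\overline{\Gamma}^r_{\alpha\beta}$ and $r_{,\hat{\alpha}\hat{\beta}}=-\overline{\Gamma}^r_{\alpha\beta}$ reduce the remaining two terms to Christoffel computations; since the dominant contribution of $\overline{\Gamma}^r_{\alpha\beta}$ comes from $\partial_r\overline{g}_{ab}$, which is odd in $r$ by the Fefferman-Graham expansion, both terms come out odd in $r$ as well.

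Combining the odd $r$-parity of the integrand with the even $r$-parity of the area form, every summand of $\oint_{\Sigma_\epsilon}\mathcal{B}_r\,ds_{\overline{k}_\epsilon}$ contributes only odd powers of $\epsilon$, so no $\epsilon^0$ coefficient survives. The main obstacle lies in the parity verification of the two Hessian-type terms: one must carefully expand the Christoffels of $\overline{h}$ in all index configurations, exploiting the orders $\overline{h}_{ra}=O(r^3)$ and $\overline{h}_{rr}=1+O(r^6)$ that follow from \eqref{h} once $z=O(r^4)$, and confirm that no hidden even-power piece leaks through. If $r^4\log r$ terms appear in the Fefferman-Graham expansion in this even-dimensional setting, one must additionally verify that their contribution to the boundary integral produces only $\epsilon^k\log\epsilon$ with $k\ne 0$, which still vanishes in the finite part as $\epsilon\to 0$.
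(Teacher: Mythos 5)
Your proposal is correct and follows essentially the same route as the paper: split off $\overline{S}_r=O(r)$, then show term by term that the five singular summands have expansions missing exactly the powers of $r$ that could pair with the expansion of $ds_{\overline{k}_{\epsilon}}$ to produce an $\epsilon^0$ coefficient, using the special${}^2$ condition ($z=O(r^4)$, hence $\overline{h}=\overline{g}+O(r^5)$) together with the vanishing first $r$-derivatives of $R^{\overline{Y}}$ and $R^{\overline{Y}}_{rr}$ from \cref{4.2}. Your parity framing is a mild strengthening of the paper's ``no $r^{0}$ and no $r^{-2}$ coefficient'' criterion; its one concrete advantage is that it also explicitly rules out an $\epsilon^{0}$ contribution from the $r^{-3}$ term pairing with a possible cubic term of the length form, a case the paper's stated criterion leaves implicit.
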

\begin{proof}
As $\overline{S}_r=O(r),$ it is enough to show that $$\oint_{\Sigma_{\epsilon}}(r^{-3}|dr|^2_{\overline{h}} 
-\frac{\Delta_{\overline{h}}r}{r^2}+r^{-2}r_{\hat{r}\hat{\beta}}r^{\hat{\beta}}+r^{-1}R^{\overline{Y}}_{\hat{r}\hat{\beta}}r^{\hat{\beta}}-\frac{1}{2}r^{-1}R^{\overline{Y}})ds_{\overline{k}_{\epsilon}}$$
has no finite part as $\epsilon\to 0.$ Because the "length form"  $ds_{\overline{k}_{\epsilon}}$ has no first degree term i.e. $\partial_{\hat{r}}\sqrt{\det{\overline{k}_{r}}}=0$ on $\Sigma,$ it is enough to show that the terms in the integrand have expansions with no $0$-degree term and no $-2$ degree term in $r$, we will call a term with this property "good". Recalling that 
$|dr|^2_{\overline{h}}=1+O(r^5)$ gives us that the first term in the integrand is good. 
Now we compute the second term
\begin{align}
    \Delta_{\overline{h}}r&=\overline{h}^{\alpha\beta}\nabla^{\overline{h}}_{\hat{\alpha}} \nabla^{\overline{h}}_{\hat{\beta}}r\\\nonumber
    &=\overline{h}^{\alpha\beta}\big[\partial_{\hat{\alpha}}r_{\hat{\beta}}-\overline{\Gamma}^{r}_{\alpha\beta} \big]\\\nonumber
    &=-\overline{h}^{\alpha\beta}\overline{\Gamma}^r_{\alpha\beta}\\\nonumber
    &=-\overline{h}^{ab}\overline{\Gamma}^r_{ab}-\overline{h}^{rr}\overline{\Gamma}^r_{rr}+O(r^5).
\end{align}
Recall that $\overline{\Gamma}_{rr}^r=O(r^4)$
and observe 
\begin{align}
    \hspace{14mm}\overline{\Gamma}_{ab}^r&=\frac{1}{2}\big\{\partial_{\hat{a}}\overline{h}_{br}+\partial_{\hat{b}}\overline{h}_{ar}-\partial_{\hat{r}}\overline{h}_{ab} \big\}+O(r^5)\\\nonumber
    &=Cr+O(r^3).
\end{align}
It follows that 
\begin{equation}
    \hspace{-30mm}\frac{\Delta_{\overline{h}}r}{r^2}=\frac{C}{r}+O(r)
\end{equation}
and so the second term is good.
Now for the third term
\begin{align}
   r_{\hat{r}\hat{\beta}}r^{\hat{\beta}}&=r_{\hat{r}\hat{r}}+O(r^5)\\\nonumber
    &=-\overline{\Gamma}_{rr}^r+O(r^5)\\\nonumber
    &=O(r^3).
\end{align}
So the third term has the form $O(r^3)$ and is therefore good. Similarly it can be shown that the last two terms are good because of \eqref{R} and \eqref{R'}.\end{proof}
\noindent \cref{4.3} combined with \eqref{GB} implies

\begin{align}\label{REN AREA''}6A(Y)&=8\pi^2\chi(Y)-\int_{Y}\frac{|W|^2_{h_+}}{4}dA_{h_+}+\int_{Y}\frac{|E|^2_{h_+}}{2}dA_+\\\nonumber&\hspace{18mm}-\int_{Y}\frac{|\mathring{B}|^4}{24}dA_+-f.p.\int_{Y_\epsilon}|\mathring{B}|^2dA_+
\end{align}
where 
\begin{equation}
\int_{Y_\epsilon}|\mathring{B}|^2dA_+=\text{(negative powers of $\epsilon$)}+f.p.\int_{Y_\epsilon}|\mathring{B}|^2dA_++O(\epsilon).
\end{equation}
This completes the proof of \cref{3.1}. \qed \subsection{Proof Summary}
Starting with 
\begin{equation}
8\pi^2\chi(Y_{\epsilon})=\int_{Y_{\epsilon}}\frac{|\overline{W}|^2_{\overline{h}}}{4}+4\sigma_2(\overline{h}^{-1}\overline{P})dA_{\overline{h}}+\oint_{\Sigma{\epsilon}}\overline{S}_{r}ds_{\overline{k}_{\epsilon}},
\end{equation}
\cref{sigma lemma} allowed us to write $\sigma_2(\overline{h}^{-1}\overline{P})$ in terms of $\sigma_2(h_+^{-1}P_{h_+}).$ This lead to 
\begin{align}\nonumber
8\pi^2\chi(Y_{\epsilon})&=\int_{Y_{\epsilon}}\frac{|W|^2_{h_+}}{4}-\frac{|E|^2_{h_+}}{2}+\frac{R^2_Y}{24}dA_{h_+} +\oint_{\Sigma_{\epsilon}} \mathcal{B}_r ds_{\overline{k}_{\epsilon}}+O(\epsilon^2). \end{align}
Then we plugged \eqref{SC} in for $R_Y^2$ and got 
\begin{align}8\pi^2\chi(Y_{\epsilon})
&=\int_{Y_{\epsilon}}\frac{|W|^2_{h_+}}{4}dA_{h_+}-\int_{Y_{\epsilon}}\frac{|E|^2_{h_+}}{2}dA_++6\int_{Y_{\epsilon}}dA_+\\\nonumber &\hspace{5mm}+\int_{Y_{\epsilon}}\frac{|B|_{h_+}^4}{24}+|B|_{h_+}^2dA_+\\\nonumber &\hspace{5mm}+\oint_{\Sigma_{\epsilon}}\mathcal{B}_{r} ds_{\overline{k}_{\epsilon}}+O(\epsilon^2),\end{align}
where $\mathcal{B}_{r}$ represents boundary terms. By \cref{4.2} we have that the trace-free Ricci term and the $|B|^4$ term both converge. $\underset{\epsilon\to 0}{lim}\int_{Y_{\epsilon}}|W|^2_{h_+}dA_+$ clearly converges since $|W|^2_{h_+}$ is a conformal invariant of weight $-4.$ $\chi(Y_{\epsilon})=\chi(Y)$ for $\epsilon$ small enough since we can use the flow of $\nabla_{\overline{h}}r$ to deformation retract $Y$ onto $Y_{\epsilon}.$ By \cref{4.3} we get that the boundary terms do not contribute to the finite part of the above asymptotic expansion. This then implies
\begin{align}6A(Y)&=8\pi^2\chi(Y)-\int_{Y}\frac{|W|^2_{h_+}}{4}dA_{h_+}+\int_{Y}\frac{|E|^2_{h_+}}{2}dA_+\\\nonumber&\hspace{18mm}-\int_{Y}\frac{|\mathring{B}|^4}{24}dA_+-f.p.\int_{Y_\epsilon}|\mathring{B}|^2dA_+.
\end{align}
Note that the above expansion is defined in terms of a particular choice of special defining function that we made originally, specifically one for which $\eta\equiv 0$ (a special-special defining function). This makes the above formula for the renormalized area of $Y$ unsatisfying to a certain extent because we would prefer to obtain a formula that does not depend on any such choices. This issue is remedied by \cref{3.2}.
\section{Proof of Theorem 3.2}
\label{Proof 3.2}
\subsection{Outline of Approach}The strategy for proving \cref{3.2} is to compute the asymptotic expansion of $\int_{Y_{\epsilon}}|B|^2_{h_+}dA_{h_+}$ and of $\frac{1}{2}\int_{Y_{\epsilon}}\Delta^Y|B|^2dA_{h_+}$ and observe that the latter has no constant term and has a term which is singular as $\epsilon\to 0$ that cancels exactly with the singular term in the expansion of the former.

\subsection{Proof}
First we compute a $g$-unit-normal vector field to $Y$ which we call $\mu_Y.$
 Let $w=x^4-z(x^a,r)$ then
 \begin{align}
    \nabla_{g_+}w= g^{ij}[\delta_{i}^4-z_i(x^a,r)]
\partial_j&=g^{4j}\partial_j-g^{ij}z_i\partial_j \\\nonumber
&=g^{4\mu}\partial_{\mu}-g^{a\lambda}z_a\partial_{\lambda}-r^2z_r\partial_r \\\nonumber
&=r^2\partial_4+O(r^4)\partial_4+O^{\lambda}(r^4)\partial_{\lambda}+g^{ab}z_a\partial_b-r^2z_r\partial_r\\\nonumber
&=r^2[1+O(r^2)]\partial_4+O^a(r^4)\partial_a-r^2z_r\partial_r. 
\end{align}
It follows that 
\begin{equation}
    |\nabla_{g_+}w|=r\sqrt{[1+O(r^2)]\overline{g}_{44}+O(r^4)}=r[1+O(r^2)]
\end{equation}
this implies then that 
\begin{equation}
    \frac{1}{|\nabla_{g_+}w|}=\frac{1+O(r^2)}{r}
\end{equation}
Therefore 
\begin{equation}
    \mu_Y=\frac{\nabla_{g_+}w}{|\nabla_{g_+}w|}=r[1+O(r^2)]\partial_4+O^a(r^3)\partial_a-rz_r\partial_r+O(r^6)\partial_r
\end{equation}
hence \begin{align}\label{2}
    \overline{\mu}_Y&=[1+O(r^2)]\partial_4+O^{a}(r^2)\partial_a-z_r\partial_r+O(r^5)\partial_r\\\nonumber
    &=\partial_4+O(r^3)\partial_r+O^{\mu}(r^2)\partial_{\mu}.
\end{align}
Note that $z_r=O(r^3)$ because of our choice of special defining function.
Conformally changing the metric $g_+\to r^2g_+$ then the second fundamental form changes according to 
\begin{align}
    B_{\alpha\beta}&=\frac{\overline{B}_{\alpha\beta}}{r}+\frac{\overline{\mu}_Y(r)}{r^2}\overline{h}_{\alpha\beta}\\\nonumber
    &=\frac{\overline{B}_{\alpha\beta}}{r}+O_{\alpha\beta}(r)
\end{align}
But 
\begin{align}
    \overline{B}_{\alpha\beta}=\overline{g}(\nabla^{\overline{g}}_{\hat{\alpha}}\partial_{\hat{\beta}},\overline{\mu}_Y)
\end{align}
recalling that $\partial_{\hat{\alpha}}=\partial_{\alpha}+z_{\alpha}\partial_4$ and $z=O(r^4)$ gives
\begin{align}\label{B}
    \overline{B}_{\alpha\beta}&=\overline{g}(\nabla^{\overline{g}}_{\alpha}\partial_{\beta},\partial_4)+O(r^2)\\\nonumber
    &=\frac{-\partial_4\overline{g}_{\alpha\beta}}{2}+O(r^2).
\end{align}
Now 
\begin{align}
    |\overline{B}|^2_{\overline{h}}=\overline{h}^{ab}\overline{h}^{cd}\overline{B}_{ac}\overline{B}_{bd}+\overline{B}_{rr}+O(r^5),
\end{align}
but $\overline{B}_{rr}=O(r^3)$ and $\overline{B}_{ab}=\frac{-\partial_4\overline{g}_{ab}}{2}+O(r^2)$ by \eqref{B} so we get
 \begin{equation}
 |\overline{B}|^2_{\overline{h}}=|II|^2_{\infty}+O(r^2).
 \end{equation}
 Then we get
\begin{align}
    |B|^2_{h_+}&=\big[\frac{\overline{B}_{\alpha\beta}}{r}+O_{\alpha\beta}(r)\big]\big[\frac{\overline{B}_{\gamma\delta}}{r}+O_{\gamma\delta}(r)\big]h^{\alpha\gamma}h^{\beta\delta}\\\nonumber
    &=|\overline{B}|^2_{\overline{h}}r^{2}+O(r^4)\\\nonumber
    &=|II|^2_{\infty}r^2+O(r^4)
\end{align}
where $|II|_{\infty}$ corresponds to the norm of the second fundamental form of $\partial Y=\Sigma$ within $\partial X=M$ with respect to the metric $k_{\infty}=\overline{g}\big|_{\partial Y}.$ Recall that $k_{\epsilon}=g\big|_{T\partial Y_{\epsilon}}$ and $ds_{\infty},$ $ds_{k_{\epsilon}}$ are the length forms of $k_{\infty}$ and $k_{\epsilon}$ respectively.  
It follows that 
\begin{equation}
    \int_{Y_{\epsilon}} |B|^2dA_{h_+}=\oint_{\Sigma} |II|^2_{\infty} ds_{\infty}\epsilon^{-1}+f.p.\int_{Y_{\epsilon}} |B|^2dA_{h_+}+O(\epsilon).
\end{equation}
Now we make the following observation:
\begin{equation}
    \int_{Y_{\epsilon}}\Delta^{h_+} |B|^2 dA_{h_+}=-\oint_{\Sigma_{\epsilon}} \nabla_{\nu_{\epsilon}}^{h_+}|B|^2 ds_{k_{\epsilon}}
\end{equation}
where $\nu_{\epsilon}$ is the inward-pointing $h_+$-unit-normal to $\partial Y_{\epsilon}=\Sigma_{\epsilon}$ in $Y_{\epsilon}.$ We can compute $\overline{\nu}_{r}=\frac{\nabla_{\overline{h}}r}{|\nabla_{\overline{h}}r|}$ to get 
\begin{align}
    \nabla_{\overline{h}}r=\overline{h}^{\alpha\beta}r_{\alpha}\partial_{\hat{\beta}}=\overline{h}^{r\beta}\partial_{\hat{\beta}}&=\overline{h}^{rr}\partial_{\hat{r}}+O^{\hat{b}}(r^5)\partial_{\hat{b}}\\\nonumber
    &=\partial_{\hat{r}}+O^{\alpha}(r^5)\partial_{\alpha},
\end{align}
\begin{equation}
    \overline{\nu}_{r}=\partial_{\hat{r}}+O^{\alpha}(r^5)\partial_{\alpha}
\end{equation}
noting the fact that $\nu=r\overline{\nu}$
it follows that 
\begin{align}
    \nabla_{\nu_{r}}|B|^2 &=r\partial_{r} \big[|II|^2_{\infty}r^2+O(r^4) \big]+O(r^5) \\\nonumber
    &=2|II|^2_{\infty}r^2+O(r^4).
\end{align}
Now taking into account the fact that $\partial_{\hat{r}}\sqrt{\mathrm{det}\hspace{0.5mm} \overline{k}_r}$ vanishes on the boundary we see that,
\begin{equation}
-\oint_{\Sigma_{\epsilon}} \nabla_{\nu_{r}}^{h_+}|B|^2 ds_{k_{\epsilon}}= -2\oint_{\Sigma}|II|^2_{\infty} ds_{\infty} \epsilon^{-1}+O(\epsilon)
\end{equation}
therefore we may write 
\begin{equation}
    \int_{Y}|B|^2+\frac{\Delta^{h_+}|B|^2}{2}dA_{h_+}=f.p.\int_{Y_{\epsilon}}|B|^2dA_{h_+}.
\end{equation}
This completes the proof of \cref{3.2}.\qed

\section{Final Remarks}
\label{Remarks}
\noindent Some follow-up questions that arise naturally are the following: \par
Shi-Tsai Feng \cite{feng2016volume} showed that renormalized volume can be defined for asymptotically hyperbolic manifolds of dimension 4 that have a totally geodesic compactification. The Graham-Witten hypersurface we consider in this paper is asymptotically hyperbolic as a manifold in its own right, it also has a totally geodesic compactification. What is the relationship between its renormalized volume in the sense of Feng and its renormalized area. \par In \cite{Alex} the authors obtain a formula for renormalized area for surfaces that aren't minimal but only meet the boundary orthogonally. With this in mind, another direction to take in the 4 dimensional hypersurface case would be to relax conditions on $X$ and $Y$ to the point where $X$ is only asymptotically hyperbolic with a totally geodesic compactification and $Y$ satisfies conformally invariant conditions. We could then obtain a Gauss-Bonnet formula for the renormalized area of $Y$ which will not be an invariant of $Y,$ but will depend on the compactification. The advantage of such a formula is that it will lead to a conformal invariant of $Y.$ The author conjectures that the appropriate conditions on $Y$ should be $z^{(1)}=z^{(3)}=0$ on $\Sigma.$ Many of the terms in the formula in \cref{1.1} are pointwise conformal invariants so this generalization should yield a nice conformal invariant on $Y.$ \par Another question is whether one can bound some of the terms in the Gauss-Bonnet formula in order to obtain a rigidity result on Graham-Witten minimal hypersurfaces. This may be easier to analyze if the formula is first obtained for a more general class of hypersurfaces like those described in the previous paragraph.
\par 
Another direction to take is to generalize the formula in this paper to higher codimensions and to adapt the techniques from \cite{yang2008renormalized} to generalize the formula in this paper to $Y^{2k}$ for all $k\in \mathbb{N}.$ $Y$ is an asymptotically hyperbolic manifold and as such, the results of \cite{graham2003scattering} already apply to $Y.$ 

\section{Acknowledgments}\label{Ack}The majority of this work was carried out during the author's Ph.D. studies at the University of Notre Dame. The author would like to thank his adviser Professor Matthew Gursky for introducing him to this area of geometry and to this problem in particular, as well as for many helpful conversations. The author would also like to thank his postdoctoral supervisor Professor Stephen McKeown for many helpful comments and for help surveying the literature. This work was partially supported by the National University of Ireland \'Eamon de Valera Travelling Studentship in Mathematics. 

\bibliographystyle{amsplain}
\bibliography{bibliography.bib}

\vfill

\end{document}